\theoremstyle{definition}
\newtheorem{lem}{Lemma}[section]
\newtheorem{cor}[lem]{Corollary}
\newtheorem{prop}[lem]{Proposition}
\newtheorem{thm}[lem]{Theorem}
\newtheorem{conj}[lem]{Conjecture}
\newtheorem{defn}[lem]{Definition}
\newtheorem{ex}[lem]{Example}
\newtheorem*{equalnorm}{Equal Norm Problem}
\newtheorem*{partition}{Partition Problem}
\newcommand{\indicator}{\text{\usefont{U}{bbold}{m}{n}1}}
\newcommand{\spec}{\operatorname{Spec}}
\newcommand{\OK}{\mathcal{O}_K}
\newcommand{\Cl}{\textup{Cl}}
\newcommand{\ideal}[1]{\mathfrak{#1}}
\newcommand{\p}{\ideal{p}}
\newcommand{\q}{\ideal{q}}
\newcommand{\bbz}{\mathbb{Z}}
\newcommand{\bbn}{\mathbb{N}}
\newcommand{\bbq}{\mathbb{Q}}
\newcommand{\imp}{\ensuremath{\Rightarrow{}}}
\newcommand{\mon}{D(\mathfrak{B}_\Gamma(Cl_K))}
\renewcommand{\geq}{\geqslant}
\renewcommand{\leq}{\leqslant}
\numberwithin{equation}{section}
\begin{document}
\email[J.~Coykendall]{jcoyken@clemson.edu}
\email[J.~Kettinger]{jkettin@clemson.edu}
\author{Jim Coykendall and Jared Kettinger}
\address{Blank}

\title{Galois Action and Localization in Number Fields}
\begin{abstract}
    For a Galois number field $K$, the Galois group $\text{Gal}(K/\bbq)$ acts on the class group $\Cl_K$ in a very natural way: $\sigma\cdot[I]=[\sigma(I)]$ for any $\sigma \in \text{Gal}(K/\bbq)$, $[I]\in \Cl_K$. In this paper, we will explore how the unique properties of this group action work together to elucidate the relationship between these two groups---developing and expanding upon some known results from a new perspective. To this end, we explore the class groups of localizations of the ring of integers $\OK$. These turn out to be powerful tools for understanding $\Cl_K$ and overrings of $\OK$. The paper concludes with some interesting observations about normset arithmetic and complexity---topics intimately related to this action. 
\end{abstract}

\maketitle
\sloppy

\section{Introduction and Notation}

Throughout this paper, $K$ will denote a Galois number field with Galois group $G:= \text{Gal}(K/\bbq)$, and $\OK$ its ring of integers with class group $\Cl_K$ and class number $h_K = |\Cl_K|$. The Galois group of $K$ acts on the class group in a very natural way. For any $\sigma \in G$ and $[I] \in \Cl_K$, we can define the action $\sigma\cdot [I] = [\sigma (I)]$. This group action provides us with various tools for characterizing the relationship between the two groups. More than this, we also have the additional property that $\sigma\cdot ([I][J]) = (\sigma \cdot [I])(\sigma\cdot[J])$ for any $\sigma \in G$ and $[I],[J] \in \Cl_K$. This induces a map from $G$ to $\text{Aut}(\Cl_K)$ given by $\sigma \overset{\psi}{\mapsto} \phi$ where $\phi([I]) = \sigma \cdot [I] = [\sigma(I)]$. Finally, we have the \enquote{norm property} that $\prod_{\sigma \in G}\sigma \cdot [I] =  \left[\prod_{\sigma \in G}\sigma(I)\right] =  \text{Prin}(\OK)$ for any $[I] \in \Cl_K$---for details, see \cite{Marcus2018}. Note, in this paper we will study the case where $K$ is Galois over $\bbq$ with the knowledge that many of the theorems herein apply also to the case when $K/L$ is a Galois extension of number fields with $\mathcal{O}_L$ a principal ideal domain (PID). Now, these characteristics of the action motivate the following definition. 

\begin{defn}\label{maindef}
    Let \( G \) and \( A \) be groups with \( A \) abelian, and let
    \[
        \alpha: G \times A \longrightarrow A, \quad (g, a) \mapsto g \cdot a
    \]
    be a function. If \( \alpha \) satisfies the following properties:
    \begin{enumerate}
        \item \( g_1 \cdot (g_2 \cdot a) = (g_1 g_2) \cdot a \) for all \( g_1, g_2 \in G \), \( a \in A \),
        \item \( e_G \cdot a = a \) for all \( a \in A \),
        \item \( g \cdot (a_1 a_2) = (g \cdot a_1)(g \cdot a_2) \) for all \( g \in G \), \( a_1, a_2 \in A \),
        \item \( \displaystyle\prod_{g \in G} (g \cdot a) = e_A \) for all \( a \in A \),
    \end{enumerate}
    then we say that \( \alpha \) is a \textit{norm-like action}.
\end{defn}

Throughout this paper, we will see how properties 1-4 can be used to place strong restrictions on the structure of $\Cl_K$ given $G$ and vice versa. We will place a particular emphasis on the techniques developed and some applications to factorization theory. While the class group is inherently connected to the arithmetic of rings of integers, this particular action is intimately related to the factorization properties of the normset of a ring of integers. This connection was first studied by the first author in \cite{C96b}. The recent work of Geroldinger, Halter-Koch, and Zhong in \cite{GHKZ22} has inspired renewed interest in this action as it relates to normset factorization---particularly in the quadratic case. 


Throughout this paper, we will present new proofs of some existing theorems as well as extend various results on the structure of $\Cl_K$ using new techniques. Previous work done on this subject has been primarily through the lens of representation theory---considering $\Cl_K$ as a $G$-module. This approach was utilized by Fr{\"o}hlich in 1952 (\cite{Frolich1952}) and subsequently many others. Cornell and Rosen used the Galois module structure of the class group in \cite{cornell1981} to give group-theoretic constraints on the structure of $\Cl_K$ when $h_K$ is known. We will begin with a similar approach and introduce new techniques to expand upon their results. Others such as Lemmermeyer  (\cite{Lemmermeyer2003}) and Iwasawa (\cite{Iwasawa1966}) used the Galois module structure of $\Cl_K$ along with assumptions on intermediate fields to give various results on the $p$-rank of the class group. In \cite{komatsu2001galois}, Komatsu and Nakano generalize this work, using the same structure and some group-theoretic considerations to give further restrictions on the $p$-rank of $\Cl_K$---once again using assumptions on the intermediate fields. We will develop some of the tools used throughout these works in this chapter from a strictly group-theoretic point of view. However, we diverge from these methods by considering the class groups of overrings of $\mathcal{O}_K$ rather than $\Cl_F$ for number fields $F \subseteq K$. This allows us to derive related results under fewer assumptions. For a thorough review of the previous work done in this area, see \cite{Metsankyla2007}.

In the following section, we explore how conditions 1 and 2, the typical conditions of a group action, together with 4 place restrictions of the structure of $\Cl_K$ for a Galois number field $K$ of given degree. In Section 3, we focus on applications of condition 3 to the inverse class group problem. In particular, we ask which number fields $K$ can have a given abelian group as their class group. Section 4 considers the structure of the class group of localizations of the form $\OK[\frac{1}{\alpha}]$. This ultimately strengthens many of the previously developed results by, among other things, allowing us to restrict to Sylow $p$-subgroups of the class group. In Section 5, the authors consider the class groups of overrings of Dedekind domains broadly. The paper culminates with a characterization of those overrings of Galois rings of integers whose class groups admit the same norm-like group action described above. This ultimately demonstrates that all class groups of interest can be realized as the class group of a simple localization. Finally, Section 6 considers the arithmetic of norms of algebraic rings of integers---a topic closely tied to this action. We conclude by drawing an interesting connection between this arithmetic and the partition problem.

\section{Direct Consequences of the Group Action}

In this section, we produce results on the structure of the class group of Galois number fields which follow directly from the action of the Galois group $G = \text{Gal}(K/\bbq)$ on the class group $\Cl_K$ in conjunction with the norm property $\prod_{\sigma\in G} \sigma(I) \in \text{Prin}(\mathcal{O}_K)$ for any ideal $I \subseteq \mathcal{O}_K$. We begin with a lemma which we will use extensively throughout this paper.

\begin{lem}\label{mainlem}
    Let $K$ be a Galois number field with $G = \text{Gal}(K/\bbq)$ and class group $\Cl_K$. If $G$ acts trivially on $[I] \in \Cl_K$, the order of $[I]$ divides $|G| = [K:\bbq]$.
\end{lem}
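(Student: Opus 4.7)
The plan is to apply the norm property (condition 4 of Definition 1.1) directly, exploiting the triviality of the action on $[I]$ to collapse the product into a power.

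First I would unpack what \enquote{$G$ acts trivially on $[I]$} means: for every $\sigma \in G$ we have $\sigma \cdot [I] = [\sigma(I)] = [I]$ in $Cl_K$. Next I would invoke the norm property, which says
\[
\prod_{\sigma \in G} \sigma \cdot [I] \;=\; \Bigl[\prod_{\sigma \in G} \sigma(I)\Bigr] \;=\; \text{Prin}(\OK),
\]
the identity element of $Cl_K$. Substituting $\sigma \cdot [I] = [I]$ into each factor on the left, the product becomes $[I]^{|G|}$, and comparing with the right-hand side gives $[I]^{|G|} = e_{Cl_K}$. Hence the order of $[I]$ divides $|G| = [K : \bbq]$.

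There is no real obstacle here; the lemma is essentially an immediate consequence of the \enquote{norm-like} axiom (4) combined with the hypothesis of trivial action. The only subtlety worth flagging is that we are using the ideal-theoretic form of the norm property stated in the introduction (that $\prod_\sigma \sigma(I)$ is principal), which is a standard fact for Galois extensions; this ensures the product on the right-hand side is indeed the identity class, not merely a class that happens to be fixed by $G$.
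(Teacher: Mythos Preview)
Your proof is correct and matches the paper's own argument essentially line for line: invoke the norm property $\prod_{\sigma\in G}\sigma\cdot[I]=\text{Prin}(\OK)$, replace each factor by $[I]$ using the trivial-action hypothesis, and conclude $[I]^{|G|}=\text{Prin}(\OK)$.
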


\begin{proof}
    Assume $G$ acts trivially on $[I] \in \Cl_K$. Now, by the norm property, we have 
    \[
    \prod_{\sigma \in G} \sigma(I)  \in \text{Prin}(\OK). 
    \]
    Thus, we have 
    \[  
     \prod_{\sigma \in G} [\sigma(I)] = \text{Prin}(\OK) \imp \prod_{\sigma \in G} \sigma \cdot[I] = \text{Prin}(\OK). 
    \]
    As $G$ acts trivially on $[I]$, we have 
    \[
    \text{Prin}(\OK) =  \prod_{\sigma \in G} \sigma \cdot[I] =  \prod_{\sigma \in G} [I]  = [I]^{|G|}.
    \]

    This completes the proof.
\end{proof}

The first result we present follows from Lemma 2 in \cite{cornell1981} using the representation theoretic approach. We will prove it directly here and generalize the result in Section $4$ with the techniques developed therein. 

\begin{thm}\label{cong}
    Let $K$ be a Galois number field of degree $p^r$ where $p$ is prime. Then, $h_K \equiv 0 \text{ or } 1 \text{ mod } p$.
\end{thm}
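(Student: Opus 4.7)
The plan is to exploit the fact that $G$ is a $p$-group acting on the finite abelian group $Cl_K$ and to combine the standard orbit-counting congruence with Lemma \ref{mainlem}.

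First I would decompose $Cl_K$ into $G$-orbits. Since $|G|=p^{r}$, every orbit has size $p^{k}$ for some $0\le k\le r$. Splitting off the singleton orbits (i.e.\ the fixed classes) from the rest gives the standard $p$-group identity
\[
h_K \;=\; |Cl_K| \;=\; |Cl_K^{G}| \;+\; \sum_{|O|>1}|O|,
\]
where every term in the sum on the right is a positive power of $p$. Reducing modulo $p$ yields $h_K\equiv |Cl_K^{G}|\pmod{p}$, so the whole problem reduces to understanding the order of the fixed subgroup $Cl_K^{G}$.

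Next I would invoke Lemma \ref{mainlem}: every class $[I]\in Cl_K^{G}$ is by definition fixed by the full Galois action, so the lemma says its order divides $|G|=p^{r}$. In other words, $Cl_K^{G}$ is a (finite abelian) $p$-group, hence $|Cl_K^{G}|$ is itself a power of $p$. There are now exactly two cases: either $|Cl_K^{G}|=1$, in which case $h_K\equiv 1\pmod{p}$; or $|Cl_K^{G}|\ge p$, in which case $p$ divides $|Cl_K^{G}|$ and therefore $h_K\equiv 0\pmod{p}$. In either case the conclusion of the theorem holds.

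I do not anticipate a genuine obstacle here. The only subtle point is making sure the fixed subgroup really is forced to be a $p$-group, and this is precisely the content of Lemma \ref{mainlem} applied class-by-class; once that is in hand the orbit-counting argument is completely standard for a $p$-group action on a finite set.
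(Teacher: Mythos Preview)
Your proposal is correct and follows essentially the same approach as the paper: orbit decomposition for a $p$-group action, the congruence $h_K\equiv |Cl_K^{G}|\pmod p$, and Lemma \ref{mainlem} to force the fixed classes to have $p$-power order. The only cosmetic difference is that you package the conclusion via ``$Cl_K^{G}$ is a $p$-subgroup,'' whereas the paper picks a single non-identity fixed class $[J]$ and applies Lagrange to its order; both routes are the same argument.
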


\begin{proof}
    Assume that $h_K  \not\equiv 1$ mod $p$. Once again, let $G$ act on $\Cl_K$ via the action $\sigma \cdot [I] = [\sigma(I)]$. By the orbit-stabilizer theorem, the lengths of the orbits of this group action divide $|G| = [K:\bbq] = p^r$. In particular, they must be elements of the set $\{1,p,p^2,\dots, p^r\}$. 

    Now, the identity of $\Cl_K$ has a trivial orbit of order $1$. The orbits partition $\Cl_K$, and all non-trivial orbits have order congruent to $0$ mod $p$. Hence, as we assumed $h_K  \not\equiv 1$ mod $p$, there must be some non-identity class of $\Cl_K$ in a trivial orbit. Call this element $[J] \neq \text{Prin}(\OK)$. By Lemma \ref{mainlem}, $[J]$ has order dividing $p^r$, and as we assumed $[J]$ was not the identity, $p$ must divide the order of $[J]$. Therefore $p \text{ divides } h_K$, so $h_K \equiv 0$ mod $p$. This completes the proof.
    
\end{proof}

This theorem demonstrates one of the many ways in which quadratic number fields are exceptional. Beyond their peculiarity of being Galois in general, Theorem \ref{cong} leaves all class numbers permissible in the quadratic case. The same is true of the following. 

\begin{thm}\label{greaterThanP}
    If $K$ is a Galois number field of degree $n$ and $p$ the smallest prime divisor of $n$, $h_K = 1$ or $h_K \geq p$.
\end{thm}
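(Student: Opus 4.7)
The plan is to mimic the structure of the proof of Theorem \ref{cong}, but replacing the prime-power hypothesis with the more flexible observation that every divisor of $n$ is either $1$ or at least $p$. Let $G$ act on $Cl_K$ by $\sigma \cdot [I] = [\sigma(I)]$. I assume $h_K \neq 1$ and aim to conclude $h_K \geq p$ by producing either a large orbit or a non-identity fixed class of large order.

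First I would invoke the orbit-stabilizer theorem: every $G$-orbit in $Cl_K$ has size dividing $|G| = n$. Since $p$ is the smallest prime divisor of $n$, every divisor of $n$ is either $1$ or at least $p$, so every orbit of the action has size in $\{1\} \cup \{m : m \geq p\}$. Now pick any non-identity class $[I] \in Cl_K$, which exists because $h_K > 1$, and split into two cases depending on the size of its orbit.

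In the first case, the orbit of $[I]$ has size at least $p$. Then $Cl_K$ contains this orbit, so $h_K \geq p$ immediately. In the second case, the orbit of $[I]$ has size $1$, i.e.\ $G$ acts trivially on $[I]$. Then Lemma \ref{mainlem} applies and gives that the order of $[I]$ divides $|G| = n$. Because $[I] \neq \text{Prin}(\OK)$, its order is strictly greater than $1$, and being a divisor of $n$ it must be at least $p$. By Lagrange's theorem the order of $[I]$ divides $h_K$, so $p \mid h_K$ and in particular $h_K \geq p$.

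There is really no serious obstacle here beyond keeping the case split clean; the content of the argument is the combination of (i) divisors of $n$ are bounded below by $p$ once they exceed $1$, and (ii) Lemma \ref{mainlem}, which converts a fixed class into a class whose order divides $n$. The theorem also suggests a generalization worth noting in passing: the same argument shows that if $h_K \neq 1$ then $h_K$ is divisible by some prime divisor of $n$, since every non-identity orbit contributes either a factor of a prime divisor of $n$ to $|Cl_K|$ (via its length) or a non-identity fixed class whose order is such a prime divisor.
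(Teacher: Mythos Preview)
Your proof is correct and is essentially the paper's argument, just organized as a direct case split rather than by contradiction; the paper assumes $1<h_K<p$, concludes all orbits are trivial (since a nontrivial orbit would have size $\geq p>h_K$), and then derives the same contradiction from Lemma~\ref{mainlem}.

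One caveat on your closing remark: the parenthetical justification for the ``generalization'' is not quite right. An orbit of size $m>1$ is merely a subset of $Cl_K$, not a subgroup, so $m\mid n$ does not by itself force any prime divisor of $n$ to divide $h_K$. If you want that stronger statement you need a different argument (e.g.\ if \emph{every} orbit is trivial then Lemma~\ref{mainlem} shows the exponent of $Cl_K$ divides $n$; otherwise the class equation gives $h_K\equiv f\pmod{q}$ for any prime $q\mid n$, where $f$ is the number of fixed classes, and one must control $f$).
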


\begin{proof}
    Assume for the purpose of contradiction that $1 < h_K < p$. As $h_K < p$, the smallest prime divisor of $|G|$, the orbit-stabilizer theorem tells us every element of $\Cl_K$ has a trivial orbit under the action of $G$ on $\Cl_K$. As $h_K > 1$, we may take $[J] \neq \text{Prin}(\OK)$ in $\Cl_K$. Then, by Lemma \ref{mainlem}, we must have $[J]^{|G|} = \text{Prin}(\OK)$, so the order of $[J]$ divides $|G| = n$. However, this is a contradiction as $1 < |[J]| \leq h_K < p$, and $p$ is the smallest prime divisor of $n$.
\end{proof}

The question of whether every finite abelian group can be realized as the class group of a quadratic ring of integers is a famous open problem. We will consider some related questions in the next section. For now, we now turn our attention to some results on Galois number fields of odd degree. 

\begin{thm}\label{NoEven}
    Let $K$ be a Galois number field of odd degree. Then, $\Cl_K$ cannot have a unique involution.
    
\end{thm}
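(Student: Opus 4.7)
The plan is to argue by contradiction using the fact that the Galois action respects the group structure of $Cl_K$ (condition 3 of Definition \ref{maindef}), so the orbit of an involution consists entirely of involutions, and then to apply Lemma \ref{mainlem} for the conclusion.

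First I would suppose, for contradiction, that $Cl_K$ has a unique involution $[J]$; that is, $[J]$ is the only element of $Cl_K$ satisfying $[J]^2 = \text{Prin}(\OK)$ with $[J] \neq \text{Prin}(\OK)$. For any $\sigma \in G$, I would observe that $(\sigma\cdot[J])^2 = \sigma\cdot[J]^2 = \sigma\cdot \text{Prin}(\OK) = \text{Prin}(\OK)$ by condition 3 (and the fact noted in Section 2 that $g\cdot e_A = e_A$). Moreover, $\sigma \cdot [J] \neq \text{Prin}(\OK)$, since otherwise applying $\sigma^{-1}$ (condition 1 and 2) gives $[J] = \text{Prin}(\OK)$, a contradiction. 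Hence $\sigma\cdot[J]$ is an involution of $Cl_K$.

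Next I would invoke the uniqueness hypothesis: since $[J]$ is the only involution, $\sigma\cdot [J] = [J]$ for every $\sigma \in G$. In other words, $G$ acts trivially on $[J]$.

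Finally, Lemma \ref{mainlem} applies to $[J]$ and yields that the order of $[J]$ divides $|G| = [K:\bbq]$. But the order of $[J]$ is $2$, while $[K:\bbq]$ is odd, which is the desired contradiction. The whole proof is short; there is no real obstacle, only the small observation that one must verify $\sigma\cdot[J]$ remains a nontrivial involution, which is immediate from conditions 1--3 of the norm-like action.
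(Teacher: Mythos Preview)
Your proof is correct and follows essentially the same approach as the paper: both argue that the unique involution must be fixed by every $\sigma\in G$ (the paper phrases this as ``each $\sigma$ is an automorphism,'' while you unpack it via conditions 1--3), and then invoke Lemma~\ref{mainlem} to obtain the parity contradiction.
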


\begin{proof}
    Assume for the purpose of contradiction that $\Cl_K$ has a unique element of order 2, call it $[J]$. Thus, as each $\sigma \in G$ is an automorphism, we must have $\sigma \cdot [J] = [\sigma(J)] = [J]$ for all $\sigma \in G$. Hence, $G$ acts trivially on $[J]$, so by Lemma \ref{mainlem} the order of $[J]$ divides $|G|$. However, this is a contradiction as $[J]$ has order $2$ and $|G|$ is odd. 
\end{proof}

In particular, for a Galois number field of odd degree, $\Cl_K$ cannot have a unique invariant factor of even order. Notably, this precludes cyclic groups of even order. Now, from Theorem \ref{NoEven} we get the following factorization result which first appears as Lemma 3.5 in \cite{Co2}.

\begin{cor}\label{commonCor}
    Let $K$ be a Galois number field of odd degree. Then $\OK$ is an HFD if and only if $\OK$ is a UFD.
\end{cor}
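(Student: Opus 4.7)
The plan is to reduce the corollary to a simple combination of Theorem \ref{NoEven} and the classical Carlitz theorem characterizing HFDs among rings of integers. The forward direction (UFD implies HFD) is immediate since unique factorization forces all irreducible factorizations of a given element to be identical, hence of the same length. Only the converse requires argument.

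For the converse, I would invoke Carlitz's classical result: the ring of integers $\OK$ of a number field $K$ is a half-factorial domain if and only if $h_K \leq 2$. Once this is in hand, the problem reduces to ruling out the case $h_K = 2$ under the standing hypothesis that $[K:\bbq]$ is odd. But if $h_K = 2$, then $Cl_K \cong \bbz/2\bbz$, which contains exactly one element of order $2$. This is precisely the configuration forbidden by Theorem \ref{NoEven}, which says that a Galois number field of odd degree cannot have a class group with a unique involution. Hence $h_K = 2$ is impossible, leaving $h_K = 1$, i.e. $\OK$ is a PID and therefore a UFD.

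I anticipate no real obstacle in the proof itself, since the work has already been done by Theorem \ref{NoEven}; the only external ingredient is Carlitz's HFD characterization, which should be cited explicitly. If one preferred a self-contained argument that avoided Carlitz entirely, one could instead combine Theorem \ref{greaterThanP} with the HFD hypothesis: the smallest prime divisor $p$ of $[K:\bbq]$ is at least $3$, so $h_K = 1$ or $h_K \geq 3$, and Carlitz's bound $h_K \leq 2$ then forces $h_K = 1$. Either way, the structural input from Section 3 does the heavy lifting, and the corollary follows as a short two-line deduction.
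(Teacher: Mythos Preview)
Your proposal is correct and follows essentially the same approach as the paper: invoke Carlitz's theorem to reduce to $h_K \leq 2$, then use Theorem \ref{NoEven} to eliminate $h_K = 2$. The paper likewise notes the alternative route through Theorem \ref{greaterThanP} that you mention at the end.
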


\begin{proof}
    A well-known result from Carlitz (\cite{carlitz}) tells us $\OK$ is an HFD if and only if $h_K = 1$ or $2$. Noting that Theorem \ref{NoEven} disallows $h_K = 2$ and $\OK$ is a UFD if and only if $h_K = 1$ completes the proof. 
\end{proof}

Note that this result also follows directly from Theorem \ref{greaterThanP}. We conclude this section with a specific application of these results to factorization in cyclotomic number fields. These number fields in particular played a central role in the development of modern factorization theory. Being Galois in general, they provide an apt setting for an application of a generalization of the results we have developed in this section. 

\begin{thm}
    Let $p < 23$ be an odd prime and $a\in \bbz$. If $K$ is the splitting field of $x^p - a$, $\OK$ is an HFD if and only if $\OK$ is a UFD.
\end{thm}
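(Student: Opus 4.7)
The plan is to avoid the absolute extension $K/\bbq$, which has even degree ($p-1$ or $p(p-1)$), and instead apply a relative version of Corollary \ref{commonCor} to $K/L$, where $L:=\bbq(\zeta_p)$. Classically, for every odd prime $p<23$ the class number $h_L=1$, so $\mathcal{O}_L$ is a PID, and the remark following the introduction permits one to apply the paper's results to the Galois extension $K/L$ in place of $K/\bbq$.

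Since $L$ already contains the $p$-th roots of unity, $K = L(\sqrt[p]{a})$. By Kummer theory, $K/L$ is Galois of degree either $1$ or $p$. In the degree-$p$ case, Corollary \ref{commonCor} applied to $K/L$ (a Galois extension of odd degree $p$ over the PID $\mathcal{O}_L$) immediately yields that $\mathcal{O}_K$ is an HFD if and only if it is a UFD. In the degree-$1$ case, $K=L$ and $h_K=1$, so $\mathcal{O}_K$ is already a UFD (hence trivially an HFD). To see these two cases exhaust the hypothesis on $a$, note that if $a=\beta^p$ with $\beta\in L$, then $a^{p-1} = N_{L/\bbq}(\beta)^p$ is a $p$-th power in $\bbq$; since $\gcd(p-1,p)=1$, this forces $a$ itself to be a $p$-th power in $\bbq$, and combined with the $p$-th-power-freeness hypothesis this restricts $a$ to $\pm 1$, both of which give $K=L$.

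The step requiring the most care, and which I expect to be the main obstacle, is justifying the relative version of Corollary \ref{commonCor} explicitly. This reduces to verifying the norm property $\prod_{\sigma\in\mathrm{Gal}(K/L)}\sigma(I)\in\mathrm{Prin}(\mathcal{O}_K)$ for each ideal $I\subseteq\mathcal{O}_K$: the product is $\mathrm{Gal}(K/L)$-invariant, so it equals $J\mathcal{O}_K$ for some ideal $J\subseteq\mathcal{O}_L$, and since $\mathcal{O}_L$ is a PID, $J$ (and therefore $J\mathcal{O}_K$) is principal. With this in hand, Lemma \ref{mainlem} transfers verbatim to $K/L$, Theorem \ref{NoEven} then applies to the extension $K/L$, and Corollary \ref{commonCor} is available as needed to close the argument.
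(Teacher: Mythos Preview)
Your approach is precisely the paper's: pass to the relative extension $K/\bbq(\zeta_p)$, use that $\bbz[\zeta_p]$ is a PID for odd primes $p<23$, and invoke the relative form of Corollary~\ref{commonCor} licensed by the remark in the introduction; you are in fact more careful than the paper in treating the degenerate degree-$1$ case and in spelling out why the relative norm property holds. One small correction to your justification: Galois-invariance of an ideal of $\mathcal{O}_K$ does not by itself force it to be extended from $\mathcal{O}_L$ (a ramified prime above $\mathfrak p$ is Galois-invariant but not of the form $J\mathcal{O}_K$), though the specific product $\prod_{\sigma}\sigma(I)$ is indeed extended, since it equals $N_{K/L}(I)\,\mathcal{O}_K$ for the relative ideal norm, so your conclusion stands.
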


\begin{proof}
    We begin by noting that $a$ is a perfect $p^{\text{th}}$ power then $\OK=\mathbb{Z}[\omega]$, where $\omega$ is a primitive $p^{th}$ root of unity, is known to be a UFD under the assumption that $p<23$. We will therefore assume that $a$ is not a perfect $p^{\text{th}}$ power. The splitting field of $x^p - a$, is Galois over $\bbq$ and is given by $K \cong \bbq(\omega,\alpha)$ where $\alpha$ a root of $x^p - a$. This gives us the following lattice.

\begin{center}
\begin{tikzpicture}[node distance=1.5cm]

  \node (Q)                            {$\mathbb{Q}$};
  \node (Qw) [above left=of Q]         {$\mathbb{Q}(\omega)$};
  \node (Qa) [above right=of Q]        {$\mathbb{Q}(\alpha)$};
  \node (Qwa) [above=of Q]             {$\mathbb{Q}(\omega,\alpha)$};

  \draw (Q) -- (Qw);
  \draw (Q) -- (Qa);
  \draw (Qw) -- (Qwa);
  \draw (Qa) -- (Qwa);

\end{tikzpicture}
\end{center}

Where $\bbq(\omega,\alpha)$ is Galois over $\bbq(\omega)$ of degree $p$. Now, as $p < 23$, $\bbz[\omega]$ is a UFD, so the result follows from Corollary \ref{commonCor}.
    
\end{proof}

\section{Aut$(\Cl_K)$ and the Inverse Class Group Problem}

Recall that the action of $G = \text{Gal}(K/\bbq)$ on $\Cl_K$ induces a homomorphism from $G$ to $\text{Aut}(\Cl_K)$ given by $\sigma \rightarrow\phi$ where $\phi([I]) = \sigma \cdot [I] = [\sigma(I)]$. Note, the kernel of this homomorphism is precisely the kernel of the action. In this section, we will see how this map can be used to further restrict the structure of $\Cl_K$ for a Galois number field $K$. Conversely, we will use this tool to help answer questions related to the inverse class group problem. This terminology is used in the spirit of the inverse Galois problem. In particular, the inverse class group problem asks if an arbitrary abelian group can be realized as the class group of a ring of integers. This question was famously answered positively for the more general class of Dedekind domains by Claborn in \cite{Claborn1966}. In the setting of the imaginary quadratic rings of integers, the answer is known to be no. Certain groups can be shown by brute force not to appear, the smallest among them being $(\bbz/3\bbz)^3$. For details, the reader is encouraged to see \cite{watkins2004class}. The question remains open for rings of integers in general. In this section, for a given finite abelian group $A$, we will explore which rings of integers may admit $\Cl_K \cong A$. We begin with a direct proof of another result from \cite{cornell1981} which we will return to with some new found tools in section 4. 

\begin{thm}\label{n>1}
    Let $K$ be a cyclic extension of degree $p^m$ where $p$ is an odd prime. Then, $\Cl_K \ncong \bbz/p^n\bbz$ for $n > m$. 
\end{thm}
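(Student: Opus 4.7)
The plan is to argue by contradiction using the induced homomorphism $\psi\colon G\to \operatorname{Aut}(Cl_K)$ described at the start of Section 4, together with the norm property (condition (4) of Definition 1.1). Suppose $Cl_K\cong \bbz/p^n\bbz$ with $n\geq 2$, and identify $\operatorname{Aut}(Cl_K)$ with $(\bbz/p^n\bbz)^\times$. Since $|G|=p$ is prime, $\ker\psi$ is either all of $G$ or trivial, so $\psi$ is either trivial or injective, and I would split into these two cases.

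If $\psi$ is trivial, then $G$ fixes every class of $Cl_K$, so Lemma \ref{mainlem} forces every class to have order dividing $|G|=p$. This contradicts the existence of an element of order $p^n>p$ in $\bbz/p^n\bbz$, killing this case immediately.

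If $\psi$ is injective, its image is a subgroup of order $p$ inside the cyclic group $(\bbz/p^n\bbz)^\times$ of order $p^{n-1}(p-1)$ (here I use that $p$ is odd so this group is cyclic). Such a subgroup is unique and is precisely $\{1+kp^{n-1}:k=0,1,\dots,p-1\}$. Thus a generator $\sigma$ of $G$ acts on $Cl_K$ by multiplication by some $u=1+kp^{n-1}$ with $p\nmid k$. Letting $c$ be a generator of $Cl_K$, the norm property applied to $c$ (written additively) reads
\[
\sum_{i=0}^{p-1} u^i c = 0 \quad\text{in } \bbz/p^n\bbz,
\]
which is equivalent to $p^n \mid S$ where $S=1+u+u^2+\cdots+u^{p-1}$.

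The final step is to show this divisibility fails, which is the only real computation. I would write $S=(u^p-1)/(u-1)$ and apply the lifting-the-exponent lemma (valid because $p$ is odd and $p\mid u-1$) to get $v_p(u^p-1)=v_p(u-1)+v_p(p)=(n-1)+1=n$. Hence $v_p(S)=n-(n-1)=1$, so $p^2\nmid S$ and in particular $p^n\nmid S$ when $n\geq 2$, the desired contradiction. (If one prefers to avoid citing LTE, a direct binomial expansion of $(1+kp^{n-1})^p$ modulo $p^{n+1}$, using that $\binom{p}{2}$ contributes an extra factor of $p$ for odd $p$, gives the same conclusion.) No step presents a serious obstacle; the main conceptual point is recognizing that the injectivity case is fully controlled by the unique order-$p$ subgroup of $(\bbz/p^n\bbz)^\times$, after which the norm property collapses the problem to a one-line $p$-adic valuation check.
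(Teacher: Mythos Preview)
Your proof is correct and follows essentially the same route as the paper: both split on whether $\psi$ is trivial or injective, dispatch the trivial case via Lemma~\ref{mainlem}, and in the injective case identify the image with the unique order-$p$ subgroup $\{1+kp^{n-1}\}$ of $(\bbz/p^n\bbz)^\times$ before using the norm property to force $p^n\mid S=\sum_{i=0}^{p-1}u^i$. The only difference is cosmetic: the paper evaluates $S$ by explicitly listing and summing the elements $1+jp^{n-1}$ to get $S\equiv p\pmod{p^n}$, whereas you reach $v_p(S)=1$ via lifting-the-exponent on $(u^p-1)/(u-1)$.
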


\begin{proof}
    Assume for the purpose of contradiction that $\Cl_K \cong \bbz /p^n \bbz$ for some $n > m$. Since $K$ is cyclic, so we have $G  \cong \bbz /p^m \bbz $, and as $p$ is odd, Aut$(\Cl_K)$ is cyclic of order $\varphi(p^n) = (p-1)p^{n-1}$. Now, the action of the Galois group on the class group induces a map from  $G$ to Aut$(\Cl_K)$. The image of this map, which will be denoted by $H$, is a cyclic subgroup of order $p^k$ for some $k\leq m$.

    If $H$ is trivial. Then, each element of $G$ acts trivially on the class group. Therefore, by Lemma \ref{mainlem}, every element of $\Cl_K$ has order dividing $|G| = p^m$, but this contradicts our assumption that $\Cl_K \cong \bbz /p^n \bbz$ with $n > m$. Hence, $|H|>1$.

     Let $\alpha$ be a primitive root mod $p^n$ and $\Cl_K = \langle [J] \rangle$. Then, the automorphism $\gamma \in \text{ Aut}(\Cl_K)$ mapping $[J] \mapsto [J]^\alpha$ generates Aut$(\Cl_K)$. Hence, $\langle \gamma^{(p-1)p^{n-1-k}} \rangle := \langle \psi \rangle $ is the unique subgroup of order $p^k$---namely $H$. Now,
         \[ 
         \prod_{\sigma \in G} \sigma(J) \in \text{Prin}(\OK) 
         \]
         implies
         \[ 
         \prod_{\sigma \in G} [\sigma(J)] =  \prod_{\sigma \in G} \sigma[(J)] = \prod_{i = 0}^{p^m-1} \psi^i([J]) = \text{Prin}(\OK) 
         \]
         Furthermore, $\psi^i([J]) = (\gamma^{(p-1)p^{n-1-k}})^i([J]) = [J]^{\left(\alpha^{(p-1)p^{n-1-k}}\right)^i}$. Note, $(p-1)p^{n-1-k} = \varphi(p^{n-k})$, so $\alpha^{(p-1)p^{n-1-k}} \equiv 1$ mod $p^{n-k}$. Also, because $\alpha$ is a primitive root mod $p^n$, each $(\alpha^{(p-1)p^{n-1-k}})^i= \alpha^{i(p-1)p^{n-1-k}}$ is unique mod $p^n$ for each $0 \leq i \leq p^m -1$. Hence, they are precisely the elements $\{1, p^{n-m} + 1, 2p^{n-m} + 1, \dots, (p^m-1)p^{n-m} + 1\}$ mod $p^n$. Taking their sum, we get 
         \[ 
         \sum_{i=0}^{p^m-1} \left(1 + i \cdot p^{n-m}\right) = p^m + p^{n-m} \sum_{i = 1}^{p^m-1} i = p^m + p^{n-m} \cdot\frac{(p^m-1)p^m}{2} \equiv p^m \text{ mod } p^n
         \]
         Hence, 
         \[  
         \prod_{i = 0}^{p^m-1} \psi^i([J]) = \prod_{i = 0}^{p^m-1} [J]^{\left(\alpha^{(p-1)p^{n-1-k}}\right)^i} = [J]^{\sum_{i=0}^{p^m-1} 1 + i \cdot p^{n-m}} = [J]^{p^m} = \text{Prin}(\OK).  
         \]
    But this implies the order of $[J]$ divides $p^m$, and this is our desired contradiction. Therefore, $\Cl_K$ cannot be $\bbz/ p^n \bbz$ for any $n > m$. 

\end{proof}

Recall that Theorem \ref{cong} tells us (in particular) that for a Galois number field of degree $p$, we must have $h_K \equiv 0$ or $1$ mod $p$. Thus, Theorem \ref{n>1} gives us a further restriction in the first case on the structure of the class group. We now shift our attention to the inverse class group. First, we consider the case where the class number $h_K$ is assumed to be an odd prime and try to place restrictions on $K$. Note, this is equivalent to asking which number fields $K$ can have $\Cl_K \cong \bbz/p\bbz$.

\begin{thm}\label{inverse}
    Let $K$ a Galois number field with $n = [K:\bbq]$ and $h_K = p$ prime. Then, $p|n$ or gcd$(p-1,n) > 1$. 
\end{thm}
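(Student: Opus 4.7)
The plan is to split on whether the induced homomorphism $\psi: G \to \text{Aut}(Cl_K)$ is trivial or not, and show that each case forces one of the two divisibility conclusions. Since $Cl_K \cong \bbz/p\bbz$, we have $\text{Aut}(Cl_K) \cong (\bbz/p\bbz)^{\times}$, which is cyclic of order $p-1$.

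First I would treat the case where $\psi$ is trivial. This means every $\sigma \in G$ fixes every class in $Cl_K$, so $G$ acts trivially on each $[I] \in Cl_K$. By Lemma \ref{mainlem}, the order of every element of $Cl_K$ divides $|G| = n$. Taking any non-identity class (which exists since $p \geq 2$), its order is $p$, so $p \mid n$, giving the first conclusion.

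Next I would handle the case where $\psi$ is non-trivial. Then $\text{Im}(\psi)$ is a non-trivial subgroup of $\text{Aut}(Cl_K)$. On one hand, by the first isomorphism theorem $|\text{Im}(\psi)|$ divides $|G| = n$; on the other hand, by Lagrange's theorem $|\text{Im}(\psi)|$ divides $|\text{Aut}(Cl_K)| = p-1$. Since $|\text{Im}(\psi)| > 1$, this common divisor witnesses $\gcd(p-1, n) > 1$, giving the second conclusion.

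There is no real obstacle here; the argument is essentially a dichotomy on the image of $\psi$, leveraging the fact that $\text{Aut}(\bbz/p\bbz)$ has order $p-1$ and that Lemma \ref{mainlem} converts trivial action into a divisibility statement. The only thing to double-check is that the two alternatives are exhaustive and correctly aligned with the two conclusions of the theorem, which the above argument makes clear.
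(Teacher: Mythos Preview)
Your proposal is correct and follows essentially the same approach as the paper: both split on whether the induced map $\psi: G \to \text{Aut}(Cl_K)$ is trivial, invoke Lemma \ref{mainlem} in the trivial case, and use the first isomorphism theorem plus $|\text{Aut}(\bbz/p\bbz)| = p-1$ in the non-trivial case. The paper treats $p=2$ separately via Theorem \ref{NoEven}, but your uniform argument already covers it, since $\text{Aut}(\bbz/2\bbz)$ is trivial and the dichotomy collapses to the first case.
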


\begin{proof}
    If $p=2$, the result follows directly from Theorem \ref{NoEven}, so we will assume $p$ is odd. Note that $h_K = p$ implies $\Cl_K \cong \bbz/p\bbz$. Let $\psi: G \rightarrow \text{Aut}(\Cl_K)$ be the homomorphism induced by the action of $G$ on $\Cl_K$. First, if $\text{ker}(\psi) = G$, then $G$ acts trivially on all elements of $\Cl_K$. Thus, for any non-identity element $[I] \in \Cl_K$, $|[I]| = p$ must divide $|G| = n$ by Lemma \ref{mainlem}. Alternatively, assume ker$(\psi) \lneq G$. Then, by the first isomorphism theorem, $[G: \text{ker}(\psi)]$ divides both $|G| = n$ and $|\text{Aut}(\Cl_K)| = p-1$, so gcd$(p-1,n) > 1$.
\end{proof}

This theorem tells us that having class group $\bbz/p\bbz$ is a relatively restrictive condition. For example, only Galois number fields of degree divisible by $2$ or $17$ can have class group $\bbz/17\bbz$. We continue this section with a few specific examples of the inverse class group problem which present new techniques and highlight the interesting interplay between the induced homomorphism and the norm property of the group action. 

\begin{ex}
    Let us assume $\Cl_K \cong \bbz/13\bbz$ for some Galois number field $K$. None of the theorems developed thus far preclude a cubic Galois extension from having such a class group, and indeed we see that the number field with defining polynomial $x^3-x^2-354x-2441$ is one such example (\cite[\href{https://www.lmfdb.org/NumberField/3.3.1129969.1}{Number Field 3.3.1129969.1}]{lmfdb}). Now, any cubic Galois extension has $G \cong \bbz/3\bbz$, so the induced homomorphism $\psi: G \rightarrow \text{Aut}(\Cl_K)$ must be injective as the alternative would imply there exists an element in $\bbz/13\bbz$ of order $|G| = 3$. 
    
    Now, $\text{Aut}(\bbz/13\bbz)$ is cyclic of order 12 and can be generated by $\phi$ which maps $1 \overset{\phi}{\mapsto}2$. As a permutation, we have $\phi = (1\,\,2\,\,4\,\,8 \,\,3\,\,6\,\,12\,\,11\,\,9\,\,5\,\,10\,\,7)$. Hence, the unique subgroup of order $3$ is generated by $\gamma = \phi^4 = (1\,\,3\,\,9)(2\,\,6\,\,5)(4\,\,12\,\,10)(8\,\,11\,\,7)$. Thus, $\bbz/3\bbz$ must map onto the subgroup $\{\indicator,\gamma,\gamma^2\}$ where $\indicator$ is the identity automorphism. By the norm property, we must have $a + \gamma(a) + \gamma^2(a) = 0$ for all $a \in \bbz/13\bbz$. This is equivalent to the elements in each $3$-cycle of $\gamma = (1\,\,3\,\,9)(2\,\,6\,\,5)(4\,\,12\,\,10)(8\,\,11\,\,7)$ summing to a number congruent to $0$ mod $13$ which we observe to hold. 
\end{ex}

The following theorem is inspired by this example and demonstrates that the phenomenon described at the end must occur in general for $\text{Aut}(\bbz/p\bbz)$. 

\begin{thm}
    Let $p$ be an odd prime and $\phi=(a_1\,a_2\,\ldots\,a_{p-1})$ a cyclic generator of $\text{Aut}(\bbz/p\bbz)$. If $n$ properly divides $p-1$, then the elements in each disjoint cycle of  $\phi^n$ sum to $0$ mod $p$.
\end{thm}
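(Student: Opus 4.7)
My plan is to translate the cycle structure of $\phi^n$ into a statement about a multiplicative subgroup of $(\bbz/p\bbz)^*$, and then use a geometric series to compute each cycle-sum explicitly.

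First I would identify $\text{Aut}(\bbz/p\bbz)$ with $(\bbz/p\bbz)^*$ in the usual way, so that a cyclic generator $\phi$ corresponds to multiplication by a primitive root $\alpha$ modulo $p$. Under this identification, $\phi^n$ is multiplication by $\alpha^n$, and the disjoint cycles of $\phi^n$ (viewed as a permutation of $\{1,2,\ldots,p-1\}$) are precisely the orbits of $(\bbz/p\bbz)^*$ under the cyclic subgroup $\langle \alpha^n\rangle$. Since $n$ divides $p-1$, the element $\alpha^n$ has order $d := (p-1)/n$, so every cycle has length exactly $d$, and there are $n$ cycles. Concretely, the cycle through a representative $a\in(\bbz/p\bbz)^*$ is
\[
\{a,\; a\alpha^n,\; a\alpha^{2n},\;\ldots,\; a\alpha^{(d-1)n}\}.
\]

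Next I would compute the sum of this cycle as a geometric series modulo $p$:
\[
\sum_{j=0}^{d-1} a\,\alpha^{jn} \;=\; a\cdot\frac{\alpha^{dn}-1}{\alpha^n-1}.
\]
Since $dn=p-1$, Fermat's Little Theorem gives $\alpha^{dn}\equiv 1\pmod p$, so the numerator is $0$ modulo $p$. The hypothesis that $n$ \emph{properly} divides $p-1$ is what guarantees $n<p-1$, so $\alpha^n\not\equiv 1\pmod p$ (as $\alpha$ has order exactly $p-1$); hence $\alpha^n-1$ is a unit in $\bbz/p\bbz$. Dividing, we conclude the cycle-sum is $0\bmod p$, as required.

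The argument is essentially a one-line application of the geometric series formula once the right identification is made, so I do not expect a serious obstacle. The only subtle point to flag is the role of the word \emph{properly}: if one allowed $n=p-1$, then $\phi^n$ would be the identity, each cycle would be a singleton $\{a\}$, and the cycle-sum $a$ would generally be nonzero. Making the dependence on $n<p-1$ explicit (so that $\alpha^n-1$ is invertible mod $p$) is the crucial hypothesis and worth highlighting in the writeup.
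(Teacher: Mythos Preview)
Your proposal is correct and follows essentially the same approach as the paper: identify $\phi$ with multiplication by a primitive root, write each cycle of $\phi^n$ as $\{a,a\alpha^n,\ldots,a\alpha^{(d-1)n}\}$ with $d=(p-1)/n$, sum via the geometric series, and use Fermat's Little Theorem for the numerator together with the ``properly'' hypothesis to ensure $\alpha^n-1$ is invertible. The paper's proof is virtually identical, differing only in notation.
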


\begin{proof}
    Say $\phi(1) = a$, then we must have $\phi(x) = a\cdot x$ for any $x \in \bbz/p\bbz$. As $\phi$ generates $\text{Aut}(\bbz/p\bbz)$, this also implies that $a$ is a primitive root mod $p$. Now, any $x \in \bbz/p\bbz$ is contained in a $(p-1)/n$ cycle in $\phi^n$ of the form $(x\,\,\, xa^n\,\,\,x a^{2n} \dotsm x a^{(\frac{p-1}{n}-1)n})$. Taking the sum, we get
    \[
    x\sum_{r=0}^{\frac{p-1}{n}-1}(a^n)^r = x\cdot\frac{1-(a^n)^\frac{p-1}{n}}{1-a^n}= x\cdot \frac{1-a^{p-1}}{1-a^n}
    \]
    By Fermat's Little Theorem, we have $1-a^{p-1} \equiv 0$ mod $p$. The result then follows from the fact that $n$ \textit{properly} divides $p-1$, so $1-a^n \not\equiv 0$ mod $p$. 
    
\end{proof}

\begin{ex}
    Consider a Galois number field $K$ with class group $\Cl_K \cong (\bbz/2\bbz)^3$. It is well known that $\text{Aut}((\bbz/2\bbz)^3) \cong \text{GL}_3(\bbz/2\bbz)$, the group of invertible $3 \times 3$ matrices over the field of two elements with order $|\text{GL}_3(\bbz/2\bbz)| = 168 = 2^3\cdot3\cdot7$. Employing the same methods as in the proof of Theorem \ref{inverse}, we immediately see that the order of the extension $n = [K:\bbq]$ must be divisible by $2,3$ or $7$. However, Theorem \ref{cong} tells us that no Galois extension of degree $3^r$ can have class group $(\bbz/2\bbz)^3$ for any $r\geq 1$. We will now show that in fact no Galois number field of degree $3m$ where $\text{gcd}(2,m) = \text{gcd}(7,m) =1$ can have class group $(\bbz/2\bbz)^3$. 

    Assume $|G| = 3m$ where $m$ is not divisible by $2$ or $7$. Once again, the homomorphism $\psi: G \rightarrow \text{Aut}(\Cl_K) \cong \text{GL}_3(\bbz/2\bbz)$ must be non-trivial as $\Cl_K$ has no non-identity elements of order dividing $|G| = 3m$. Thus, $im(\psi)$ must be a subgroup of order $3$ by the first isomorphism theorem. 

    In the group $\text{GL}_3(\bbz/2\bbz)$, the elements of order $3$ lie in a unique conjugacy class which can be represented by the matrix $E$ below.

    \[
    E = \begin{bmatrix}
    1 & 0 & 0 \\
    0 & 0 & 1 \\
    0 & 1 & 1
    \end{bmatrix}
    \]

In particular, $E$ (and hence any of its conjugates) has a nontrivial eigenvector corresponding to the eigenvalue $1$ in $(\bbz/2\bbz)^3$. If we denote by $[I]$ the ideal class corresponding to this eigenvector for the matrix $E'$ that generates $im(\psi)$, we can see that $[I]$ is fixed by all elements of $\langle E'\rangle$ and so, by Lemma \ref{mainlem}, we have that $|[I]|=2$ divides $|G|$ which is our desired contradiction.
\end{ex}

Similar methods can be employed to investigate the inverse class group problem for $p$-elementary abelian groups as well as cyclic groups of order $2^mp^r$ with $m\in \{0,1\}$ and $r \geq 0$ where $p$ is an odd prime. Note, along with $n =4$, the latter are precisely the values of $n$ for which $\text{Aut}(\bbz/n\bbz)$ is cyclic.

\section{Localization and the Class Group} 

In this section, we will use localization to strengthen some of the results developed previously. First, recall that any overring---in particular, any localization---of a Dedekind domain remains Dedekind. Intuitively, as a localization of $R$ is attained by turning a set of elements $S\subseteq R$ into units, we expect the class group of $R_S$ to be smaller than that of $R$ as those non-principal ideals $I \subseteq R$ for which $I \cap S \neq \emptyset$ will become principal. As we will demonstrate, \enquote{small} localizations will tend to lead to small changes in the class group. In the context of rings of algebraic integers, this will be useful in further restricting the possible structure of $\Cl_K$. The following result can be viewed as a particular case of Theorem 1.5 in \cite{Claborn1965}---one of Claborn's pioneering works on Dedekind domains. 

It is useful at the outset to recall that $D[\frac{1}{\alpha}]$ is the localization of $D$ at the multiplicative set $\{\pm\alpha^n\,|\,n \in\bbn\}$ which is the complement of the set-theoretic union of all prime ideals not containing $\alpha$. 

\begin{thm}\label{local}
    Let $D$ be a Dedekind domain and $\alpha \in D$ a nonzero nonunit with 
    \[
    (\alpha) = \p_1^{n_1}\p_2^{n_2}\dotsm\p_k^{n_k}.
    \]
    Then $\Cl(D[\frac{1}{\alpha}]) \cong \Cl(D)/\left<[\p_1],[\p_2],...,[\p_k]\right>$. 
\end{thm}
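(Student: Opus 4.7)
The plan is to produce the isomorphism via the extension-of-ideals map and then compute its kernel by using unique factorization in the Dedekind domain $D[\frac{1}{x}]$. Concretely, I would define
\[
\phi:Cl(D)\longrightarrow Cl\!\left(D\!\left[\tfrac{1}{x}\right]\right),\qquad [I]\mapsto \left[I\,D\!\left[\tfrac{1}{x}\right]\right],
\]
which is well defined on fractional ideal classes because localization commutes with products of ideals and carries principal fractional ideals to principal fractional ideals. Checking that $\phi$ is a group homomorphism is then a routine verification using $(IJ)D[\tfrac{1}{x}] = (ID[\tfrac{1}{x}])(JD[\tfrac{1}{x}])$.

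Next I would establish surjectivity. Because $D[\tfrac{1}{x}]$ is itself Dedekind, every fractional ideal of $D[\tfrac{1}{x}]$ is a product of (possibly negative) powers of nonzero primes, and the standard ideal–prime correspondence under localization says that the nonzero primes of $D[\tfrac{1}{x}]$ are exactly $\mathfrak{q}\,D[\tfrac{1}{x}]$ for primes $\mathfrak{q}$ of $D$ with $x\notin\mathfrak{q}$, i.e.\ for $\mathfrak{q}\notin\{\p_1,\dots,\p_k\}$. Hence any fractional ideal of $D[\tfrac{1}{x}]$ is the extension of some fractional ideal of $D$, which gives surjectivity of $\phi$.

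The heart of the proof is identifying $\ker(\phi)$ with $N:=\langle[\p_1],\dots,[\p_k]\rangle$. The inclusion $N\subseteq\ker(\phi)$ is immediate: since $x\in\p_i$, we have $1=x\cdot\tfrac{1}{x}\in \p_i\,D[\tfrac{1}{x}]$, so each $\p_i$ extends to the unit ideal and $[\p_i]\in\ker(\phi)$. For the converse, suppose $[I]\in\ker(\phi)$, so that $I\,D[\tfrac{1}{x}]=\alpha\,D[\tfrac{1}{x}]$ for some $\alpha$ in the field of fractions. Set $J:=\alpha^{-1}I$, so $[J]=[I]$ in $Cl(D)$ and $J\,D[\tfrac{1}{x}]=D[\tfrac{1}{x}]$. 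Factor $J$ in $D$ as $J=\mathfrak{q}_1^{a_1}\cdots\mathfrak{q}_r^{a_r}$. Using the prime correspondence above and unique factorization in $D[\tfrac{1}{x}]$, any $\mathfrak{q}_j\notin\{\p_1,\dots,\p_k\}$ extends to a \emph{nonunit} prime of $D[\tfrac{1}{x}]$, and these distinct extensions must appear with exponent $a_j$ in $J\,D[\tfrac{1}{x}]=D[\tfrac{1}{x}]$; unique factorization forces every such $a_j$ to be $0$. Therefore $J$ is supported only on $\p_1,\dots,\p_k$, so $[I]=[J]\in N$.

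The result then follows from the first isomorphism theorem. The main obstacle is the kernel computation, and within it, the step that a fractional ideal of $D$ extending to the unit ideal of $D[\tfrac{1}{x}]$ must be a product of powers of $\p_1,\dots,\p_k$; this is really the point at which Dedekind unique factorization and the prime-correspondence under localization both get used in an essential way. Everything else is formal manipulation of fractional ideals.
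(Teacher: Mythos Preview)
Your proof is correct and follows the same overall architecture as the paper's: define the extension map $\phi$, compute its kernel, and invoke the first isomorphism theorem. The substantive difference lies in how the reverse inclusion $\ker(\phi)\subseteq\langle[\p_1],\dots,[\p_k]\rangle$ is established. The paper works element-wise: it writes a nonprincipal integral $I$ as a $2$-generated ideal $(r_1,r_2)$, chooses a principal generator $r\in D$ for $ID[\tfrac{1}{x}]$, and manipulates the explicit relations $rs_i=r_ix^{n_i}$ and $rx^{m_1}=t_1r_1+t_2x^{m_1-m_2}r_2$ to build an integral ideal $J\sim I$ containing a power of $x$, hence supported on the $\p_i$. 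Your argument is cleaner and more structural: you pass directly to the fractional ideal $J=\alpha^{-1}I$ with $JD[\tfrac{1}{x}]=D[\tfrac{1}{x}]$ and then let unique factorization in $D$ together with the prime correspondence under localization do the work. Your route avoids the generator bookkeeping and makes transparent exactly which facts about Dedekind domains are being used; the paper's route is more concrete but correspondingly more laborious. You also explicitly verify surjectivity of $\phi$, which the paper uses implicitly when it appeals to the first isomorphism theorem but does not spell out.
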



\begin{proof}

    Let $\psi: \Cl(D) \rightarrow \Cl(D[\frac{1}{\alpha}])$ be the canonical map $[I] \overset{\psi}{\mapsto} \left[I D[\frac{1}{\alpha}]\right]$. An application of Propositions 1.3 and 1.5 of \cite{Claborn1965} gives that $\psi$ maps $\Cl(D)$ onto $\Cl(D[\frac{1}{\alpha}])$ and the kernel is generated by the classes $[\p_\beta]$ where the $\p_\beta$ range over the prime ideals such that $\p_\beta D[\frac{1}{\alpha}] = D[\frac{1}{\alpha}]$. As $D[\frac{1}{\alpha}] = D_S$ where $S = \{\pm\alpha^n\, |\, n \in \bbn\}$, these are precisely the prime ideals $\p$ such that $\p\cap S \neq \emptyset$ or the primes containing $\alpha^n$ for some $n \in \bbn$. By observing that these are the prime divisors of $(\alpha)$, we obtain the result by the first isomorphism theorem. 
\end{proof}

Notably, if $\Cl(D)$ is finite, Theorem \ref{local} implies that any homomorphic image of $\Cl(D)$ can be realized as the class group of such a localization of $D$. Claborn gives a similar, slightly weaker result in \cite{Claborn1965} where, given a Dedekind domain $A$ with arbitrary class group $G$ and subgroup $H$, he constructs a related Dedekind domain with class group isomorphic to $G/H$. As he answered the inverse class group problem for Dedekind domains, Theorem \ref{local} brings us a step closer for overrings of $\OK$. In essence, $\OK[\frac{1}{\alpha}]$ is very nearly a ring of integers itself. In fact, it is the precise analog if the base ring $\bbz$ were replaced by $\bbz[\frac{1}{N(\alpha)}]$ for some nonzero $\alpha \in \mathcal{O}_K$.

\begin{thm}\label{replace}
    Let $K$ be a number field and $\alpha \in \mathcal{O}_K$ a nonzero element. Then $\mathcal{O}_K[\frac{1}{\alpha}]$ is the integral closure of $\bbz[\frac{1}{N(\alpha)}]$ in $K$. 
\end{thm}

\begin{proof}
    As an overring of a Dedekind domain, $\mathcal{O}_K[\frac{1}{\alpha}]$ is also Dedekind and thus integrally closed in $K$. Thus, we need only show that $\mathcal{O}_K[\frac{1}{\alpha}]$ is integral over $\bbz[\frac{1}{N(\alpha)}]$; it is sufficient to show $\frac{1}{\alpha}$ is integral over $\bbz[\frac{1}{N(\alpha)}]$. Let $f(x) \in \bbz[x]$ be the minimal polynomial of $\alpha$; we write $f(x) = x^n + m_{n-1}x ^{n-1}+ \dotsm + m_1x \pm N(\alpha)$. Thus,
    \[
    \pm N(\alpha) + m_1\alpha + \dotsm + m_{n-1}\alpha^{n-1} + \alpha^n = 0.
    \]
    Multiplying by $\pm\frac{1}{N(\alpha)\alpha^n}$ yields
    \[
    \left(\frac{1}{\alpha}\right)^n \pm \left( \frac{m_1}{N(\alpha)}\left(\frac{1}{\alpha}\right)^{n-1} + \dotsm + \frac{m_{n-1}}{N(\alpha)}\left(\frac{1}{\alpha}\right)+ \frac{1}{N(\alpha)}\right) = 0.
    \]
    This completes the proof. 
\end{proof}




Note also that $\Cl(\OK[\frac{1}{\alpha}])$ is finite and retains the useful property that each ideal class contains a prime ideal (this follows directly from the correspondence between prime ideals of $\OK[\frac{1}{\alpha}]$ and prime ideals of $\OK$ disjoint from $\{\pm\alpha^n,\,n \in\bbn\}$). This allows for the application of many theorems on rings of integers to be applied to $\OK[\frac{1}{\alpha}]$. The following gives one such example.

\begin{ex}
    Consider the ring of integers $\OK = \bbz[\sqrt{-14}]$ with class group $\Cl_K \cong \bbz/4\bbz$ generated by $\p = (3,1-\sqrt{-14})$. Writing $\q = (2,\sqrt{-14})$, it is not difficult to show $[\q] = [\p^2]$. Now $\q^2=(2)$, so by Theorem \ref{local} we have $\Cl(\OK[\frac{1}{2}]) \cong\bbz/2\bbz$ implying $\OK[\frac{1}{2}] = \bbz[\sqrt{-14}, \frac{1}{2}]$ is an HFD which is not a UFD. 
\end{ex}

Most significantly, Theorem \ref{local} allows us to excise a minimal amount of $\Cl_K$ and thus produce relatively large homomorphic images. Once again, we can realize any homomorphic image of $\Cl_K$ in this way. In general, if $\gamma: \Cl_K \twoheadrightarrow H$ is a surjective homomorphism with $\text{ker}(\gamma) = \left<[\p_1],[\p_2],...,[\p_n]\right>$, $\Cl(\OK[\frac{1}{\alpha}]) \cong H$ where $(\alpha) = (\p_1\p_2\dotsm\p_n)^{h_K}$. 

The remainder of this section will be dedicated to leveraging this fact to extend the techniques and theorems developed in sections 2 and 3. One would hope that the Galois group of $K$ acts on the class group of $\OK[\frac{1}{\alpha}]$ in the same manner as $\Cl_K$ which would allow us to place the same restrictions on the homomorphic images of the class group---further constraining the structure of $\Cl_K$. Unfortunately, the action $\sigma\cdot[\mathfrak{P}]=[\sigma(\mathfrak{P})]$ for $\sigma \in G$ and $[\mathfrak{P}]\in \Cl(\OK[\frac{1}{\alpha}])$ will not be well-defined in general as $\sigma(\frac{1}{\alpha})=\sigma(\alpha)^{-1}$ need not be in $\OK[\frac{1}{\alpha}]$ in general (Example \ref{gex} in the next section also illustrates this). However, in the case that $\alpha \in \bbz$, we avoid this issue as we will have $\sigma(\frac{1}{\alpha})=\frac{1}{\alpha}$ for all $\sigma \in G$, and this simple observation will prove useful presently.

\begin{cor}\label{local2}
    Let $K$ be a Galois number field and suppose that $\alpha \in \OK$ is a nonzero nonunit with prime ideal factorization
    \[
    (\alpha) = \p_1^{n_1}\p_2^{n_2}\dotsm\p_k^{n_k}.
    \]
    Then $\Cl(\OK[\frac{1}{N(\alpha)}]) \cong \Cl_K/G\left<[\p_1],[\p_2],...,[\p_k]\right>$ where $N(\alpha)$ is the norm of $\alpha$, and $G\left<[\p_1],[\p_2],...,[\p_k]\right>$ denotes the subgroup generated by the images of the ideal classes $[\p_i]$ under the Galois action.
\end{cor}

\begin{proof}
    Recall that the ideal norm agrees with the element norm up to a unit for principal ideals. Thus, as $K$ is Galois, we have $(N(\alpha)) = N((\alpha)) = \prod_{\sigma \in G}\sigma((\alpha)) = \prod_{\sigma \in G}\sigma(\p_1)^{n_1}\dotsm \sigma(\p_k)^{n_k}$. The theorem then follows by applying Theorem \ref{local} to $N(\alpha)$.
\end{proof}

Now, the group action of $G$ on the class group of $\OK[\frac{1}{N(\alpha)}]$ will be well-defined. More than this, it will be a norm-like Galois action. 

\begin{thm}\label{norm-like}
    For a Galois number field $K$ and nonzero rational integer $n$, the group action $\sigma \cdot [\mathfrak{P}] = [\sigma(\mathfrak{P})]$ for $\sigma \in \text{Gal}(F/\bbq), [\mathfrak{P}] \in \Cl\left(\OK[\frac{1}{n}]\right)$ is a well-defined, norm-like action. 
\end{thm}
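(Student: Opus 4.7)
The plan is to interpret ``Galois action'' as ``norm-like action'' in the sense of Definition 1.1 and verify its four axioms for the proposed action on $Cl\!\left(\OK[\tfrac{1}{N(x)}]\right)$. The single observation that drives everything is that $N(x)\in\bbz$, so $\sigma(N(x))=N(x)$ for every $\sigma\in G$; consequently each $\sigma$ restricts to a ring automorphism of the overring $\OK[\tfrac{1}{N(x)}]$, which is exactly what the previous result (Corollary \ref{local2}) was set up to exploit.

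First I would dispense with well-definedness: since each $\sigma$ is a ring automorphism of $\OK[\tfrac{1}{N(x)}]$, it sends ideals to ideals and principal ideals to principal ideals (as $\sigma((r)) = (\sigma(r))$), so the action descends to ideal classes. Axioms $(1)$ and $(2)$ of Definition 1.1 are immediate from the fact that $\sigma\mapsto \sigma|_{\OK[1/N(x)]}$ is a group homomorphism $G\to\operatorname{Aut}\!\left(\OK[\tfrac{1}{N(x)}]\right)$, and axiom $(3)$ follows because ring automorphisms preserve ideal products: $\sigma(\mathfrak{P}\mathfrak{Q})=\sigma(\mathfrak{P})\sigma(\mathfrak{Q})$.

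The only substantive point is axiom $(4)$, the norm property. For this I would write every ideal $\mathfrak{P}$ of $\OK[\tfrac{1}{N(x)}]$ as $\mathfrak{P}=I\cdot\OK[\tfrac{1}{N(x)}]$ for some ideal $I\subseteq\OK$. Because $\sigma$ fixes $N(x)$, a direct calculation on generators gives $\sigma(\mathfrak{P})=\sigma(I)\cdot\OK[\tfrac{1}{N(x)}]$, and therefore
\[
\prod_{\sigma\in G}\sigma(\mathfrak{P}) \;=\; \left(\prod_{\sigma\in G}\sigma(I)\right)\cdot\OK\!\left[\tfrac{1}{N(x)}\right].
\]
The classical norm property in $\OK$ makes $\prod_{\sigma\in G}\sigma(I)$ principal in $\OK$, and principality is preserved by extension of scalars to the localization, establishing $(4)$.

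I do not anticipate any real obstacle; the whole argument is a transport of the norm-like structure from $\OK$ to the overring, powered by the Galois-invariance of $N(x)$. The one subtlety worth watching is the non-uniqueness of the ideal $I$ representing a given $\mathfrak{P}$, but any two such choices differ by an ideal that becomes principal in the localization, so the principality of $\prod_\sigma \sigma(\mathfrak{P})$ is independent of the chosen representative.
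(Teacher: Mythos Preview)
Your proposal is correct and follows essentially the same approach as the paper: dispatch axioms (1)--(3) as immediate consequences of each $\sigma$ restricting to a ring automorphism of the localization (since $N(x)\in\bbz$), and for (4) pull back to an ideal of $\OK$, invoke the norm property there, and push forward. The only cosmetic difference is that the paper chooses a prime representative $\mathfrak{P}$ and uses the canonical contraction $\p=\mathfrak{P}\cap\OK$, which sidesteps the non-uniqueness issue you flag at the end; your version handles that subtlety correctly anyway.
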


\begin{proof}
    It is sufficiently clear that conditions 1-3 from Definition \ref{maindef} hold. Now, let $[\mathfrak{P}] \in \Cl(\OK[\frac{1}{n}])$ where $\mathfrak{P}$ is a prime ideal representative. Note, such a $\mathfrak{P}$ must exist because each class of $\Cl_K$ contains infinitely many primes and only finitely many intersect $\{\alpha^n \,|\, n\in\bbn$\}. Thus, existence follows from the one to one correspondence between prime ideals of $\mathcal{O}_K$ disjoint from $S=\{\alpha^n \,|\, n\in\bbn\}$ and prime ideals of $\mathcal{O}_K[\frac{1}{\alpha}]$ given by $\p \longleftrightarrow \p \mathcal{O}_K[\frac{1}{\alpha}]$ for $\p \in \text{Spec}(\mathcal{O}_K)$. 
    
    If we write $\p = \mathfrak{P} \cap \OK$, we claim that $\left(\prod_{\sigma\in G}\sigma(\p)\right)\OK[\frac{1}{n}] = \prod_{\sigma\in G}\sigma(\mathfrak{P})$, and thus the norm property (4) is also inherited from our original group action. The veracity of this is immediate from the facts that if $S$ is a multiplicatively closed subset of $R$ and $I,J\subset R$ are ideals then $(IJ)_S=I_SJ_S$ and if $P\subset R$ is prime then $P=P_S\bigcap R$.
 \end{proof}


We now note that any restrictions the structure of $G$ places on $\Cl_K$ must also hold for $\Cl(\mathcal{O}_K[\frac{1}{n}])$, in particular, on $\Cl(\mathcal{O}_K[\frac{1}{N(\alpha)}])$ for any nonzero, nonunit $\alpha$. Thus, by Corollary \ref{local2}, if the set $\{[\p_1],...,[\p_k]\}$ is closed under the action of $G$, then we may apply our previous methods to $\Cl_K/\left<[\p_1],...,[\p_k]\right>$. This is a good example of how misleading the quadratic case can be, as in this case $\text{Gal}(K/\bbq)$ induces the subgroup $\{\indicator, -\indicator\} \leq \text{Aut}(\Cl_K)$ where $\indicator$ is the identity and $-\indicator([I]) = [I]^{-1}$, we see that any homomorphic image of $\Cl_K$ can be realized by localizing at an integer---allowing significant restriction on $\Cl_K$. 

In general, it will be sufficient for a subgroup to be $\text{Aut}(\Cl_K)$-invariant. For example, if $G$ cannot admit a Galois action on $\bbz/n\bbz$, then we cannot have $\Cl_K \cong \bbz/an\bbz$ for any $a \in \bbn$. This follows from the fact that $\bbz/an\bbz$ has a unique subgroup of order $a$, call it $H$, and the quotient $(\bbz/an\bbz)/H \cong \bbz/n\bbz$. With this in hand, Theorem \ref{NoEven} becomes an immediate corollary of Theorem 3.1 in \cite{Co2}. Furthermore, we can quickly improve upon Theorem \ref{n>1}. 


\begin{cor}
    Let $K$ be cyclic of degree $p^m$ where $p$ is an odd prime. Then, $\Cl_K \not\cong \bbz/np^{m+1}\bbz$ for any $n \in \bbn$.
\end{cor}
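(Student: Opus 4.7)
The plan is a proof by contradiction via the localization machinery of Corollary \ref{local2}. Suppose $Cl_K \cong \bbz/np^2\bbz$ for some $n \in \bbn$. The case $n=1$ is exactly Theorem \ref{n>1}, so I may assume $n \geq 2$. Because $\bbz/np^2\bbz$ is cyclic, it contains a \emph{unique} subgroup $H$ of order $n$; this uniqueness forces $H$ to be invariant under every automorphism of $Cl_K$, and in particular under the subgroup $A \leq \text{Aut}(Cl_K)$ induced by the Galois action.

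Using the fact that every ideal class in $\OK$ contains a prime representative, I would pick a prime $\p \subseteq \OK$ whose class $[\p]$ generates the cyclic group $H$. Since the order of $[\p]$ divides $h_K$, the ideal $\p^{h_K}$ is principal, so I can select a nonzero nonunit $x \in \OK$ with $(x) = \p^{h_K}$. Because $H$ is $A$-invariant, $A\langle[\p]\rangle = \langle[\p]\rangle = H$, and Corollary \ref{local2} applied to $x$ gives
\[
Cl\bigl(\OK[\tfrac{1}{N(x)}]\bigr) \;\cong\; Cl_K / A\langle[\p]\rangle \;\cong\; \bbz/p^2\bbz.
\]

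By the theorem immediately preceding Corollary \ref{local2}, the Galois group $G = \text{Gal}(K/\bbq)$ acts on $Cl(\OK[\frac{1}{N(x)}])$ as a norm-like (Galois) action. Finally I would invoke Theorem \ref{n>1}: inspection of its proof shows that the only ingredients used are that $G$ is cyclic of odd prime order $p$, the target is a cyclic $p$-group of order at least $p^2$, and the four axioms of a norm-like action, so the very same argument rules out any norm-like action of $G$ on $\bbz/p^2\bbz$. This contradicts the action just produced on $Cl(\OK[\frac{1}{N(x)}])$, yielding the result. The main obstacle I anticipate is not the localization step itself but rather justifying that Theorem \ref{n>1} may be read as an abstract statement about norm-like actions rather than as a statement about class groups of number fields; this is implicit in the authors' remark that any structural restriction $G$ places on $Cl_K$ passes to $Cl(\OK[\frac{1}{N(x)}])$, and it is precisely the leverage that makes Corollary \ref{local2} sharper than Theorem \ref{n>1}.
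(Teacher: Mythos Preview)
Your proposal is correct and follows essentially the same line as the paper: the corollary is presented there as an immediate consequence of the general principle that an $\text{Aut}(Cl_K)$-invariant subgroup (here the unique order-$n$ subgroup of the cyclic group $\bbz/np^2\bbz$) can be killed by passing to a localization $\OK[\frac{1}{N(x)}]$, leaving a norm-like $G$-action on $\bbz/p^2\bbz$ to which the argument of Theorem \ref{n>1} applies verbatim. Your write-up simply makes explicit the choice of the prime $\p$, the element $x$, and the identity $A\langle[\p]\rangle = H$, which the paper leaves to the reader.
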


Once again, as we must have $h_K\equiv0$ or $1$ mod $p$, this constitutes a serious restriction for extensions of (odd) prime degree. Now, note also that any Sylow $q$-subgroup of $\Cl_K$ is invariant under automorphism. Recalling that a group is the direct product of its Sylow $q$-subgroups, by the same logic, any restriction placed on $\Cl_K$ may also be applied to its Sylow $q$-subgroups. Applying this also to Theorem \ref{n>1}, the following stronger result which appears in \cite{Lemmermeyer2003} is immediate.

\begin{cor}
    Let $K$ be cyclic of degree $p^m$ where $p$ is an odd prime. Then, the Sylow $p$-subgroup of $\Cl_K$ is not isomorphic to $\bbz/p^n\bbz$ for any $n > m$.
\end{cor}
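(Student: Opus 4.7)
The plan is to exploit the fact that Sylow subgroups of a finite abelian group are characteristic, and then use the localization machinery of Corollary \ref{local2} to reduce the statement to an immediate application of Theorem \ref{n>1}. Suppose for contradiction that the Sylow $p$-subgroup $S$ of $Cl_K$ is isomorphic to $\bbz/p^n\bbz$ for some $n \geq 2$. Write $Cl_K = S \oplus T$, where $T$ is the direct product of the Sylow $q$-subgroups for primes $q \neq p$. Both $S$ and $T$, being Sylow subgroups of a finite abelian group, are characteristic, hence stable under every element of $\text{Aut}(Cl_K)$ and in particular under the subgroup $A$ induced by $G = \text{Gal}(K/\bbq)$.

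Next I would build a localization whose class group is exactly $S$. Enumerate $T = \{[J_1], \ldots, [J_k]\}$ and, using the standard fact that every ideal class in $\OK$ contains a prime ideal, choose prime representatives $\p_1, \ldots, \p_k$. Since every element of $Cl_K$ has order dividing $h_K$, the ideal $(\p_1 \p_2 \cdots \p_k)^{h_K}$ is principal; pick $\alpha \in \OK$ generating it. Corollary \ref{local2} then gives
\[
Cl\!\left(\OK[\tfrac{1}{N(\alpha)}]\right) \;\cong\; Cl_K \,\big/\, A\langle [\p_1], \ldots, [\p_k] \rangle.
\]
Because $\{[\p_1], \ldots, [\p_k]\} = T$ and $T$ is $A$-invariant, the $A$-closure in the denominator coincides with $T$, so the quotient is isomorphic to $Cl_K / T \cong S \cong \bbz/p^n\bbz$.

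To finish, invoke the theorem immediately preceding this corollary: the Galois group $G$ acts on $Cl(\OK[\tfrac{1}{N(\alpha)}])$ as a norm-like (indeed Galois) action. The proof of Theorem \ref{n>1} depends only on (i) $|G| = p$ an odd prime, (ii) the target class group being cyclic of order $p^n$ with $n \geq 2$, and (iii) the norm property of the action on the class group. All three hold in the localized setting, so repeating verbatim the argument of Theorem \ref{n>1} with $\OK[\tfrac{1}{N(\alpha)}]$ in place of $\OK$ — analyzing the induced map $G \to \text{Aut}(\bbz/p^n\bbz)$, excluding the trivial-image case via Lemma \ref{mainlem} and the order-$p$ image case via the explicit sum modulo $p^n$ — produces a contradiction.

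The only mild obstacle is confirming that Lemma \ref{mainlem} and the sum-mod-$p^n$ computation in Theorem \ref{n>1} do not secretly rely on $\OK$ being a full ring of integers rather than a Dedekind overring; but an inspection shows both depend solely on the norm-like structure of the $G$-action on the class group, and the preceding theorem furnishes precisely that for $Cl(\OK[\tfrac{1}{N(\alpha)}])$. Hence the Sylow version follows with no new arithmetic — the whole argument is essentially a formal application of the reduction principle articulated in the paragraph just before the statement.
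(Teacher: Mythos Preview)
Your proposal is correct and follows essentially the same route as the paper: the paper observes that Sylow subgroups are characteristic (hence $A$-invariant), so the localization machinery lets one pass to a quotient whose class group is exactly the Sylow $p$-subgroup, after which Theorem \ref{n>1} applies verbatim via the norm-like action on the localized class group. You have simply spelled out the details (the explicit choice of $\alpha$ and the invocation of Corollary \ref{local2}) that the paper leaves implicit when it calls the result ``immediate.''
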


In the same way, we can greatly strengthen Theorem \ref{cong}.

\begin{cor}\label{cong2}
    Let $K$ be a Galois number field of degree $p^r$ and $S(q)$ a =non-trivial Sylow $q$-subgroup of $\Cl_K$. Then, $p=q$ or $|S(q)| \equiv 1$ mod $p$. Therefore, if $h_K = p^r q_1^{n_1}\dotsm q_k^{n_k}$, then $q_i^{n_i} \equiv 1$ mod $p$ for all $1\leq i \leq k$.
\end{cor}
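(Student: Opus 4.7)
The plan is to apply the same technique used in the two preceding corollaries: use Corollary \ref{local2} to realize $S(q)$ itself as the class group of an overring of $\OK$, and then invoke Theorem \ref{cong}. Since $Cl_K$ is finite abelian, $S(q)$ is its unique Sylow $q$-subgroup, and the Hall $q'$-complement $H := \prod_{r \neq q} S(r)$ is a characteristic subgroup; in particular $H$ is invariant under $\mathrm{Aut}(Cl_K)$, and a fortiori under the image $A$ of $G$ in $\mathrm{Aut}(Cl_K)$.

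Because every ideal class of $\OK$ contains a prime representative, I would pick primes $\p_1,\dots,\p_k$ whose classes generate $H$ and enlarge the list by their Galois orbits; the $A$-invariance of $H$ keeps the orbits inside $H$, so $A\langle[\p_1],\dots,[\p_k]\rangle = H$. Choosing $x \in \OK$ with $(x) = (\p_1 \p_2 \cdots \p_k)^{h_K}$, which is principal since $h_K$ annihilates $Cl_K$, Corollary \ref{local2} then gives
\[
Cl\!\left(\OK\!\left[\tfrac{1}{N(x)}\right]\right) \;\cong\; Cl_K\big/A\langle[\p_1],\dots,[\p_k]\rangle \;=\; Cl_K/H \;\cong\; S(q).
\]
The theorem immediately following Corollary \ref{local2} ensures $G$ acts on this new class group via a norm-like action, and of course $|G| = p^r$ is unchanged.

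Next, the proof of Theorem \ref{cong} used only the norm-like structure and the orbit-stabilizer theorem, so it transfers verbatim to $S(q)$ under this inherited action, forcing $|S(q)| \equiv 0 \text{ or } 1 \pmod p$. Since $|S(q)| = q^n$ is a prime power, the first case requires $p \mid q^n$, i.e.\ $p=q$; otherwise $|S(q)| \equiv 1 \pmod p$, giving the stated dichotomy. The final assertion then follows by decomposing $h_K = p^r q_1^{n_1}\cdots q_k^{n_k}$ with distinct primes $q_i \neq p$: for each $i$ we obtain $q_i^{n_i} = |S(q_i)| \equiv 1 \pmod p$. The only real subtlety lies in arranging that the chosen prime set, after closure under the Galois action, generates exactly $H$ and not a proper subgroup; this is automatic from the $\mathrm{Aut}(Cl_K)$-invariance of the Hall complement together with prime-ideal class representatives.
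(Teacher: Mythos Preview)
Your argument is correct and matches the paper's intended approach: exploit that the Sylow decomposition of $Cl_K$ is $\mathrm{Aut}(Cl_K)$-invariant, pass via Corollary~\ref{local2} to an overring whose class group is $S(q)$, and re-run Theorem~\ref{cong}. The only cosmetic difference is that the paper phrases the invariance in terms of $S(q)$ itself rather than its Hall $q'$-complement, but since both are characteristic in a finite abelian group this is the same observation from the dual side.
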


This was proven for abelian number fields by Fr{\"o}hlich (\cite{Frolich1952}) which is a notably more restrictive condition. Now, observe that while Theorem \ref{cong} would not preclude an extension of degree $[K:\bbq]=3^r$ from having $h_K = 55$, Corollary \ref{cong2} shows this is impossible. In fact, for such an extension to have $h_K = 5n$ it is necessary for the $5-$adic valuation of $5n$ to be even.

\section{Overrings of Dedekind Domains}

We proceed with a general consideration of overrings of Dedekind domains with eye toward developing a general criterion for when the class group of an overring of $\mathcal{O}_K$ is a Galois module via the action we have been studying. In particular, for an overring $\mathcal{O}_K \subseteq R \subseteq K$, we would like to know when the Galois group preserves fractional ideals of $R$ and thus admits the same norm-like group action on $\Cl(R)$. We saw in Section 5 that even in the case of simple extensions, this may fail. Consider the following example. 


\begin{ex}\label{gex}
    Let $\mathcal{O}_K = \bbz[\sqrt{-5}]$ and consider the overring $R= \mathcal{O}_K\left[\frac{1}{1+\sqrt{-5}}\right]$. Now, $\text{Gal}(K/\bbq) = \{\indicator, \sigma\}$ where $\sigma$ represents complex conjugation. Note that $\sigma(R)$ fails to be an $R$-module as $\frac{1}{1+\sqrt{-5}}\cdot \sigma\left( \frac{1}{1+\sqrt{-5}}\right) = \frac{1}{1+\sqrt{-5}} \cdot \frac{1}{1-\sqrt{-5}} = \frac{1}{6} \notin R$.
\end{ex}

The following is a well-known characterization of overrings of Dedekind domains which first appeared as Theorem 1 in \cite{GILMER1966331}. 

\begin{prop}\label{DedekindOver}
    Let $D$ be a Dedekind domain and $R$ an overring of $D$. Then
    \[
    R = \underset{\p R \neq R}{\bigcap}D_{\p}
    \]
    where the $\p$ range over $\text{Spec}(D)$.
\end{prop}

The following result can be found in \cite{grams1974distribution}, and it follows quickly from the preceding proposition. We include a proof here as it is constructive and will inform later results and examples.

\begin{thm}\label{correspondence}
    Let $D$ be a Dedekind domain with torsion class group. There exists a one to one correspondence between overrings of $D$ and nonempty subsets $S\subseteq\text{Spec}(D)$ given by 
    \[
    S \longleftrightarrow \underset{\p \in S}{\bigcap}D_{\p}.
    \]
\end{thm}

\begin{proof}
    By Proposition \ref{DedekindOver}, every overring of $D$ can be expressed in the form $\underset{\p \in S}{\bigcap}D_{\p}$ for some $S \subseteq \text{Spec}(D)$. Thus, it suffices to show that $\underset{\p \in S_1}{\bigcap}D_{\p} \neq\underset{\p \in S_2}{\bigcap}D_{\p}$ for subsets $S_1,S_2 \subseteq \text{Spec}(D)$ whenever $S_1 \neq S_2$. Without loss of generality, assume $\q \in S_1 \backslash S_2.$ Recall that $\Cl(D)$ is torsion, so $|[\q]| := n$ is finite. Thus, $\q^n = (q)$ for some $q \in D$. Now, $\q$ is the unique prime ideal containing $q$, so $\frac{1}{q} \in \underset{\p \in S_2}{\bigcap}D_{\p}\backslash \underset{\p \in S_1}{\bigcap}D_{\p}$.
\end{proof}

\begin{cor}
    Let $D$ be a Dedekind domain with torsion class group. Then $D \subseteq D[\frac{1}{a}]$ is a adjacent ring extension if and only if $aD = \p^k$ for some $k \in \bbn$. 
\end{cor}

\begin{proof}
    We prove the forward implication by contraposition. Assume there are distinct primes $\p,\q \in \text{Spec}(D)$ which divide $aD$. Then, $a \in \p,\q$, and hence $\p D[\frac{1}{a}] = \q D[\frac{1}{a}] = D[\frac{1}{a}]$. Thus, by Theorem \ref{correspondence}, there is an overring properly between $D$ and $D[\frac{1}{a}]$.

    Now, for the reverse implication, assume $aD = \p^k$, $k$ is minimal respect to the property that $\p^k$ is principal, and $\q D[\frac{1}{a}] = D[\frac{1}{a}]$ for some prime $\q \in \spec(D)$. Because $aD = \p^k$, $a$ is strongly irreducible in $D$, and thus the saturation of $\{a^n \,|\, n \in \bbn\}$ in $D$ is simply $\{a^n \,|\, n \in \bbn\}$. Hence, $a^k \in \q$ for some $k \in \bbn$ which implies $a \in \q$. However, $\p$ is the unique prime ideal containing $a$, so $\p = \q$. Therefore, $\p$ is the unique prime ideal of $D$ such that $\p D[\frac{1}{a}] = D[\frac{1}{a}]$. By the correspondence in Corollary \ref{correspondence}, we have the $D \subseteq D[\frac{1}{a}]$ must be adjacent. 

\end{proof}

By the correspondence in Theorem \ref{correspondence}, we see that for any adjacent ring $R $ over $D$, there is a unique $\p \in \text{Spec}(D)$ such that $\p R = R$. What is more, recalling that any adjacent extension is simple, $\p$ uniquely determines the irreducible $a \in D$ (up to a unit) such that $R = D[\frac{1}{a}]$. In particular, we have $(a) = \p^n$ where $n = |[\p]|$. Inspired by this uniqueness, we present the following Jordan--H\"older analog for sequences of adjacent overrings.

\begin{thm}\label{jordan}
    Let $D$ be a Dedekind domain with finite class group $\Cl(D)$ such that every class contains a prime ideal. Let $R$ be an overring of $D$ which is minimal with respect to being a PID. If $D := D_0 \subseteq D_1 \subseteq D_2 \subseteq \dotsm \subseteq  D_{r_1} := R$ and $D := D_0' \subseteq D_1' \subseteq D_2' \subseteq \dotsm \subseteq  D_{r_2}' := R$ are sequences of adjacent domains, then $r_1 = r_2$ and the sequence of $\alpha_i \in R$ such that $D_{i+1} = D_i[\alpha_i]$  is unique up to reordering. What is more, such an overring is guaranteed to exist. 
\end{thm}
    
\begin{proof}

    We start by noting that for any adjacent domain $D \subseteq D_1$ there exists a unique $\p \in \text{Spec}(D)$ such that $\p D_1 = D_1$ by Theorem \ref{correspondence}. Thus, we must have $D_1 = \cap_{\q \neq \p} D_\q = D[\frac{1}{a}]$ where $aD = \q^n$ and $n = |[\q]|$. Recall that $D[\frac{1}{a}]$ is the localization of $D$ at the complement of the set-theoretic union of all prime ideals of $D$ not containing $a$ (in this case, that is all prime ideals except $\q$). By continuing this process and recalling the correspondence between prime ideals of a localization $D_S$ and prime ideals of $D$
    disjoint from $S$ given by $\p \mapsto \p D_S$, we see that there is a unique, finite set of $r_1$ primes $\p_1,...,\p_{r_1} \in \text{Spec}(D)$ such that $\p_i R = R$. Thus, any sequence of adjacent domains between $D$ and $R$ must have length $r_1 :=r$ and is unique up to our ordering of the $\p_i$. In particular, if $\p_{i}D_{i-1}$ is a prime ideal such that $\p_{i}D_i = D_i$, we must have $D_i = D_{i-1}[\frac{1}{a_i}]$ where $a_iD_{i-1} = \p_i^{n_i}$.

    Now, to construct such a PID and sequence, take some non-principal $\q_1 \in D$ such that $\q_1^{n_1} = b_1 D$, and let $D_1 = D[\frac{1}{b_1}]$. As $[\q_1] \neq \text{Prin}(D)$, we must have $|\Cl(D_1)| < |\Cl(D)|$. By the finiteness of the class group, if we continue this process, we will eventually have $|\Cl(D_t)|= 1$ for some $t  \in \bbn$ with $|\Cl(D_i)| \neq 1$ for all $0 < i < t$. Thus, $D_t$ will be minimal with respect to being a PID.
\end{proof}


We will now shift our focus back to the particular case of rings of integers. For a more general treatment of overrings of Dedekind domains and their class groups, the reader is encouraged to see \cite{Claborn1966} and \cite{grams1974distribution}. Theorem \ref{jordan} tells us that any ring of integers in particular is finitely many adjacent steps away from being a PID. In this setting, this is equivalent to asking how far the ring is from being a UFD---a question considered more broadly in \cite{anderson2010far}.

\begin{ex}
    Recall $\Cl_K \cong \bbz/4\bbz$ for $\OK = \bbz[\sqrt{-14}]$, and $\Cl_K$ is generated by $[\p_3]$ where $\p_3$ is a prime lying over $3$. Now, if $\p_2 = (2,\sqrt{-14})$, then $\p_2^2 = (2)$ and we must have $\Cl(\bbz[\sqrt{-14},\frac{1}{2}]) \cong \bbz/2\bbz$. Note, this will be a non-UFD HFD. Furthermore, as $|[\p]| =4$, we must have that $\mathfrak{P}:= \p\bbz[\sqrt{-14},\frac{1}{2}]$ is a non-principal prime ideal of the overring. Hence, if $\alpha\bbz[\sqrt{-14},\frac{1}{2}] = \mathfrak{P}^2$, $\bbz[\sqrt{-14},\frac{1}{2},\frac{1}{\alpha}]$ is a minimal PID over $\OK$.
\end{ex}

Now, even in the case of rings of integers, $R$ being a minimal PID over $\mathcal{O}_K$ does not imply the existence of such chain of adjacent domains. In fact, the following example implies that there always exists a PID over $\mathcal{O}_K$ failing to have this property except in the case that the ring of integers itself is a PID.

\begin{ex}
    Let $R$ be a ring of integers with $\Cl_K = \bbz/2\bbz$, and let $P$ be the set of all non-principal prime ideals of $R$. Now, taking $S=(\underset{\p \in P}{\cup}\p)^c$, we have that $R_S = \{\frac{r}{s} \, | \, r \in R, s\in S\}$ also has class group $\bbz/2\bbz$ by Theorem 1.5 of \cite{claborn1965dedekind}. Note, this is equivalent to localizing at the multiplicatively closed set generated by the prime elements of $R$. Now, take a (non-principal) prime $\q \in \text{Spec}(R_S)$, and let $aR_S = \q^2$ for some $n \ \in \bbn$. Then, $R_S[\frac{1}{a}]$ is minimal over $R$ with respect to being a PID (in the sense that there is no PID strictly between $R$ and $R_S[\frac{1}{a}]$ that is comparable to $R_S$), but there is no finite chain of adjacent domains between $R$ and $R_S[\frac{1}{a}]$.
\end{ex}

We now return to the particular case of rings of integers. For a more general treatment of overrings of Dedekind domains, the reader is referred to \cite{claborn1965dedekind} and \cite{Claborn1965}. In the following theorem, we adopt the notation of \cite{Nark2004} by denoting the group of fractional ideals of a Dedekind domain $R$ as $G(R)$. 

\begin{thm}\label{invariant}
    Let $K$ be a Galois number field with $G := \text{Gal}(K/\bbq)$ and $R$ an overring of the ring of integers $\mathcal{O}_K$. The following are equivalent:

    \begin{enumerate}
        \item The action $\sigma \cdot [I] = [\sigma (I)]$ for $\sigma \in G$ and $[I] \in G(R)$ is well-defined. 

        \item $R$ is Galois-invariant. That is, for all $\sigma \in G$, $\sigma(R) = R$.

        \item For all $\p \in \text{Spec}(\mathcal{O}_K)$ and $\sigma \in G$, $\p R = R \text{ if and only if } \sigma(\p)R = R$. 

        \item For any $\p, \q \in \text{Spec}(\mathcal{O}_K)$ lying over the same rational prime $p$, $\p  R = R \text{ if and only if } \q R = R$.
    \end{enumerate}
\end{thm}

\begin{proof}
    $(1 \Rightarrow 2)$ By well-definedness, we must have $\sigma(R)$ an $R$-module for all $\sigma \in G$. Then, for all $r \in R$, $r\cdot \sigma(1) = r \cdot 1 = r \in \sigma(R)$, so we have $R \subseteq \sigma(R)$. By the same argument, $R \subseteq \sigma^{-1}(R)$ and thus $\sigma(R) \subseteq R$. Therefore, $\sigma(R) = R$ as desired. 

    $(2 \Rightarrow 1)$ Assume $R$ is Galois-invariant. Let $I \in G(R)$ and $\sigma \in G$. By definition, we have $\alpha I \subseteq R$ for some $\alpha\in K$. By our assumption, $\sigma(\alpha)\sigma(I) = \sigma(\alpha I) \subseteq \sigma(R) \subseteq R$ for $\sigma(\alpha) \in K$. Now, let $r \in R$, then $r' :=\sigma^{-1}(r) \in R$, and note that $r \sigma(I) = \sigma(r')\sigma(I) \in \sigma(r' I) \subseteq \sigma(I)$ as $I$ is an $R$-module. We conclude that $\sigma(I)$ is a fractional ideal, and thus the map $I \mapsto \sigma(I)$ on $G(R)$ descends to the group action in $1$.

    $(2 \Rightarrow 3)$ We will proceed by contraposition. Assume that there exists $\p \in \text{Spec}(\OK)$ and $\sigma \in G$ such that $\p R = R$ but $\sigma(\p)R \neq R$. Then there exists $\frac{a}{b} \in U(R) \cap \p R$, so $\frac{b}{a} \in R$. However, $\sigma(\frac{b}{a}) = \frac{\sigma(b)}{\sigma(a)} \notin R$ because $\sigma(\frac{a}{b}) = \frac{\sigma(a)}{\sigma(b)} \in \sigma(\p)$ and $\sigma(\p) \cap U(R) = \emptyset$. Therefore, $R$ is not Galois-invariant.

    $(3 \Rightarrow 2)$ We prove this directly. Assume (3) holds and consider $\sigma(R)=\sigma(\cap_{\p R \neq R} (\OK)_\p)=\cap_{\p R \neq R} (\sigma(\mathcal{O}_K))_{\sigma(\p)}=\cap_{\p R \neq R} (\mathcal{O}_K)_{\sigma(\p)}$. Since $\p R \neq R$ if and only if $\sigma(\p ) R \neq R$, the proof is complete.

    $( 3 \Leftrightarrow4)$ The equivalence of these conditions follows directly from the transitive action of $G$ on the primes lying over a given rational prime (for details, see \cite{Marcus2018}). This completes the proof.
\end{proof}

This theorem demonstrates the fact that any class group of an overring of $\mathcal{O}_K$ which admits such a group action can be realized by a simple extension of the form $\mathcal{O}_K[\frac{1}{n}]$ where $n$ is an integer. In particular, we see that the types of extensions studied in the previous section are sufficient to achieve all homomorphic images which are of use to us in restricting the structure of $\Cl_K$. The theorem also provides a general framework for using this approach to understanding particular class groups. Consider the following corollary.

\begin{cor}\label{cosets}
    Let $K$ be a Galois number field of degree $p^r$ and $n$ an integer. If $H$ is the subgroup of $\Cl_K$ generated by the classes of primes lying over $n$, then $[\Cl_K:H] \equiv 0$ or $1$ mod $p$.
\end{cor}

\begin{proof}
    Theorem \ref{norm-like} tells us that $\text{Gal}(K/\bbq)$ admits a norm-like action on the class group of $\mathcal{O}_K[\frac{1}{n}]$ which will have order $[\Cl_K:H]$. The result then follows directly from a modest generalization of Theorem \ref{cong}. 
\end{proof}


\section{The Arithmetic of the Normset}

In this section, we consider some arithmetic implications of the action we have been considering. We will begin by recalling and further developing the notion of the Davenport constant of a weighted zero-sum sequence monoid from a number theoretic point of view, leading to a nice arithmetic interpretation as well as some illustrative and novel examples. We conclude by making a connection between the arithmetic of quadratic norms and the partition problem---a classic NP-complete problem. 

If $K$ is a Galois number field, the action we have been studying plays a central role in understanding the arithmetic of the normset $N_K := \{|N(\alpha)| \, : \, \alpha \in \mathcal{O}_K^*\}$. This connection was first made in \cite{C96b} where the author proved that $N_K$ is saturated if and only if $\text{{Gal}}(K/\bbq)$ acts trivially on $\Cl_K$. This connection was greatly strengthened with the advent of weighted Davenport constants. Let $G$ be a finite abelian group. Recall that the Davenport constant of $G$, denoted $D(G)$, is the minimum natural number $n$ such that any $G$-sequence $\{g_1,g_2,...,g_n\}$ of (not necessarily distinct) elements of $G$ must have a zero-sum subsequence---that is, a subsequence summing to the identity. Equivalently, we may define it as the largest natural number $m$ such that there exists a zero-sum sequence $\{g_1,g_2,...,g_m\}$ with no proper zero-sum subsequence.

Now, for a finite abelian group $G$ and $\Gamma$ a set which operates on $G$ by endomorphism, $D_\Gamma(G)$ is the smallest positive integer $n$ such that given any $G$-sequence $\{g_1,g_2,...,g_n\}$, there exists a subsequence $\{g_{i_1},g_{i_2},...,g_{i_k}\}$ and weights $\sigma_{i_1},\sigma_{i_2},...,\sigma_{i_k} \in \Gamma$ such that $\{\sigma_{i_1}(g_{i_2}),\sigma_{i_2}(g_{i_2}),...,\sigma_{i_k}(g_{i_k})\}$ is a zero-sum sequence. Note that \enquote{operating by endomorphism} means precisely meeting condition 3 of Definition \ref{maindef} and thus inducing a map into $\text{End}(G)$. We refer to $D_\Gamma$ as a weighted Davenport constant. This definition closely mimics the the definition of $D(G)$, and in fact $D_{\{\indicator\}}(G) = D(G)$. What is more, just as the Davenport constant is intimately related to the arithmetic of $\mathcal{O}_K$, the weighted Davenport constant is closely related to that of $N_K$.

Weighted Davenport constants were first studied by Adhikari et al in \cite{adhikari2006davenport}, but it was Halter-Koch who first made the connection with the ideal arithmetic of rings of integers in \cite{halter2014arithmetical}. In particular, if we allow $\Gamma = \text{Gal}(K/\bbq)$ for a Galois number field $K$, Theorem 2.2 of \cite{halter2014arithmetical} gives a nice characterization of $D_\Gamma(\Cl_K)$ in terms of norms of ideals in $\mathcal{O}_K$.

There has been a recent resurgence of interest in this area through the study of monoids of weighted zero-sum sequences (\cite{BMOS22}, \cite{GHKZ22}). Given a subset $\Gamma \subseteq \text{End}(G)$, a $G$-sequence $\{g_1,g_2,...,g_n\}$ is called a $\Gamma$-weighted zero-sum sequence if there exist $\sigma_1,\sigma_2,...,\sigma_n \in \Gamma$ such that $\{\sigma_1(g_1),\sigma_2(g_2),...,\sigma_n(g_n)\}$ is a zero-sum sequence. The set of all weighted zero-sum sequences forms a monoid under concatenation which is denoted $\mathfrak{B}_\Gamma(G)$. With this notation, we see that $D_\Gamma(G)$ is the minimal $n$ such that any $G$-sequence must have a $\Gamma$-weighted zero-sum subsequence. 

A $\Gamma$-weighted zero-sum sequence $\{g_1,g_2,...,g_n\}$ is called minimal if it cannot be partitioned into two nonempty $\Gamma$-weighted zero-sum sequences. Then $D(\mathfrak{B}_\Gamma(G))$ denotes the largest positive integer $m$ such that there exists a minimal $\Gamma$-weighted zero-sum sequence of length $m$. Note that a $\Gamma$-weighted zero-sum sequence $\{g_1,g_2,...,g_n\}$ may be minimal and have a proper $\Gamma$-weighted zero-sum subsequence. Hence, we see that $D_\Gamma(G)$ and $D(\mathfrak{B}_\Gamma(G))$ are related but distinct. 

For the remainder of this paper, we will focus on the case where $\Gamma := \text{Gal}(K/\bbq)$ for a Galois number field $K$. In this case, the authors of \cite{BMOS22} showed that there exists a transfer homomorphism from $N_K$ to $\mathfrak{B}_\Gamma(\Cl_K)$. In essence, this means that factorization properties of the normset are completely characterized by the arithmetic of  these $\Gamma$-weighted zero-sum sequences---that is, by the action of $\text{Gal}(K/\bbq)$ on $\Cl_K$. Extensive results on $\mon$ and implications for the factorization properties of $N_K$ are studied through this lens in \cite{BMOS22} and \cite{GHKZ22}. We will presently develop the connection between these two objects from a strictly number-theoretic point of view. This will allow us to give a nice arithmetic interpretation of $D(\mathfrak{B}_\Gamma(\Cl_K))$ and provide some enlightening new examples.

First, let us recall that for a Galois number field $K$ and $\alpha \in \mathcal{O}_K$, $N(\alpha) = \prod_{\sigma\in \text{Gal}(K/\bbq)}\sigma(\alpha)$. Thus, if we write $(\alpha) = \p_1\p_2\dotsm \p_m$ where the prime ideals are not necessarily distinct, it is not hard to see that $N(\beta) = N(\alpha)$ if and only if there exist $\bar{\sigma}_1,\bar{\sigma}_2,...,\bar{\sigma}_m \in \text{Gal}(K/\bbq)$ such that $(\beta) = \bar{\sigma}_1(\p_1)\bar{\sigma}_2(\p_2)\dotsm \bar{\sigma}_m(\p_m)$. This follows from the uniqueness of prime ideal factorization as $N(\alpha)$ agrees with the ideal norm, and $(N(\alpha)) = \prod_{\sigma \in \text{Gal}(K/\bbq)}\sigma((\alpha)) = \prod_{\sigma \in \text{Gal}(K/\bbq)}\sigma(\p_1)\sigma(\p_2)\dotsm \sigma(\p_m)$. Note that $\{[\p_1],[\p_2],...,[\p_m]\}$ is a zero-sum sequence in $\Cl_K$, so finding such an element $\beta$ corresponds to finding weights $\phi_1,\phi_2,...,\phi_m$ in the subgroup of $ \text{Aut}(\Cl_K)$ induced by the action of $\text{Gal}(K/\bbq)$ such that $\{\phi_1[\p_1],\phi_1[\p_1],...,\phi_n[\p_m]\}$ is also a zero-sum sequence. We will consider the problem of such norm fusions in more detail later. Let us write $\text{Gal}(K/\bbq) =\{\sigma_i\}_{i=1}^n$ and consider the table

\begin{table}[h]
    \centering
    \begin{tabular}{ccccc}
       $\sigma_1(\p_1)$  & $\sigma_1(\p_2)$  & $\sigma_1(\p_3)$ &  $\dotsm$  & $\sigma_1(\p_m)$  \\
       $\sigma_2(\p_1)$  & $\sigma_2(\p_2)$  & $\sigma_2(\p_3)$ & $\dotsm$  & $\sigma_2(\p_m)$ \\
       $\sigma_3(\p_1)$  & $\sigma_3(\p_2)$  & $\sigma_3(\p_3)$ & $\dotsm$  & $\sigma_3(\p_m)$ \\
        $\vdots$ & $\vdots$ & $\vdots$ & $\ddots$  & $\vdots$ \\
        $\sigma_n(\p_1)$  & $\sigma_n(\p_2)$  & $\sigma_n(\p_3)$ & $\dotsm$  & $\sigma_n(\p_m)$ \\
    \end{tabular}

\end{table}

containing all the factors of $(N(\alpha))$. Once again, we can determine all other elements with norm $N(\alpha)$ (up to a unit multiple) by selecting one prime from each row such that the resulting product is principal. Now, observe that $N(\alpha)$ will be reducible in $N_K$ if and only if there exist $\sigma_{j_k} \in \text{Gal}(K/\bbq)$ such that $\sigma_{j_1}(\p_1)\sigma_{j_2}(\p_2)\dotsm \sigma_{j_m}(\p_m) = (\beta)$ where $\beta$ is reducible in $\mathcal{O}_K$. That is, $\beta = \gamma_1\gamma_2$ where $\gamma_1,\gamma_2$ are nonunits, so $(\beta) = (\gamma_1)(\gamma_2) = \sigma_{j_1}(\p_1)\sigma_{j_2}(\p_2)\dotsm \sigma_{j_n}(\p_m)$. By uniqueness of prime factorization, this corresponds to partitioning $\{\p_1,\p_2,...,\p_m\}$ into two nonempty $\Gamma$-weighted zero-sum sequences. From this, we get the following characterization of $D(\mathfrak{B}_\Gamma(\Cl_K))$.

\begin{prop}
    Let $K$ be a Galois number field and $\Gamma=\text{Gal}(K/\bbq)$. If $\alpha \in \mathcal{O}_K$ with $N(\alpha)$ irreducible in $N_K$, then $D(\mathfrak{B}_\Gamma(\Cl_K))$ is a sharp upper bound for the length of the prime factorization (counting multiplicity) of $(\alpha)$.
\end{prop}

This is especially nice as it mirrors the classic characterization of $D(\Cl_K)$ as a sharp upper bound for the length of the prime factorization of $(\alpha)$ for any irreducible element $\alpha \in \mathcal{O}_K$.

\begin{ex}
    Consider $K = \bbq(\sqrt{-14})$ and $\mathcal{O}_K = \bbz[\sqrt{-14]}$ with $\Cl_K \cong \bbz/4\bbz$ generated by $[\p]$ where $\p = (3,1-\sqrt{-14})$. Now, $\{[\p],[\p],[\p],[\p]\}$ is a minimal 0-sequence of $\Cl_K$, so $\p^4=(5-2\sqrt{-14})$ (and $5-2\sqrt{-14}$ is irreducible in $\bbz[\sqrt{-14}]$). Also, $G = \text{Gal}(K/\bbq) = \{\sigma_1,\sigma_2\}$ where $\sigma_1$ is the identity and $\sigma_2$ is complex conjugation. Thus, $\sigma(\p) = \p$ and $\sigma_2(\p) = \bar{\p} = (3,1+\sqrt{-14})$. Now, let us make our selections.

    \begin{table}[h]
    \centering
    \begin{tabular}{cccc}
        $\boxed{\sigma_1(\p)}$ & $\sigma_1(\p)$ & $\boxed{\sigma_1(\p)}$ & $\sigma_1(\p)$ \\
        $\sigma_2(\p)$ & $\boxed{\sigma_2(\p)}$ & $\sigma_2(\p)$ & $\boxed{\sigma_2(\p)}$ \\
    \end{tabular}
\end{table}

Taking the product yields $\sigma_1(\p)\sigma_2(\p)\sigma_1(\p)\sigma_2(\p) = (\p\bar{\p})^2 = (3)^2 = (9)$. Therefore, we must have $N(5 - 2\sqrt{-14}) =  N(9) = N(3^2)= N(3)^2$, and we see in fact $N(5 - 2\sqrt{-14}) = (5 - 2\sqrt{-14})(5 + 2\sqrt{-14}) = 81 = 9^2 = N(3)^2$. So we see that while $5 - 2\sqrt{-14} \in \OK$ was irreducible, $N(5 - 2\sqrt{-14}) \in N_K$ need not be. 
\end{ex}

A significant majority of the work on $D_\Gamma(G)$ and $D(\mathfrak{B}_\Gamma(G))$ has been in the plus-minus case. That is, where the set of weights is taken to be $\{\indicator, -\indicator\}$ where $-\indicator(g) = -g$. For our context, this corresponds to the case when $K$ is a quadratic ring of integers as $\text{Gal}(K/\bbq) = \{\indicator, \sigma\}$ where $\sigma$ represents quadratic conjugation which acts on $\Cl_K$ by sending any element to its inverse. This follows directly from the transitive action of the Galois group on primes lying over rational primes. We will now consider a particularly nice example outside of this family. 

\begin{ex}
    Let $K = \bbq(\zeta_{37})$. Famously, we have $\Cl_K \cong \bbz/37\bbz$. Now consider the rational prime $149$. As $149 \equiv 1$ mod $37$, we must have that $(149)$ splits into $36$ distinct primes in $K$. Remarkably, it can be shown that each of these distinct primes also lies in a distinct class of $\Cl_K$. Hence, $\text{Gal}(K/\bbq)$ must act transitively on all the non-trivial elements of $\Cl_K$. This means for any minimal zero-sum sequence $\{[\p_1],[\p_2],[\p_3],[\p_4]\}$ in $\Cl_K$, there exist $\sigma_2, \sigma _4 \in \text{Gal}(K/\bbq)$ such that $\{[\p_1],\sigma_2 \cdot [\p_2],[\p_3],\sigma_4 \cdot[\p_4]\} = \{[\p_1],[\p_1]^{-1},[\p_3],[\p_3]^{-1}\}$ (note that we cannot have any $[\p_i]$ trivial because the original zero-sum sequence was minimal). This implies $D(\mathfrak{B}_\Gamma(\Cl_K)) \leq 3$. The sequence $\{[\p^{12}],[\p^{12}],[\p^{12}]\}$ where $\p$ is any non-principal prime shows us that we actually have equality: $D(\mathfrak{B}_\Gamma(\Cl_K)) = 3$. Note that in this case $D_\Gamma(\Cl_K) =2 \neq\mon$. Now, Theorems 5.7 and 7.1 in \cite{BMOS22} tell us that $\rho(N_K) = D(\mathfrak{B}_\Gamma(\Cl_K))/2 = \frac{3}{2}$ while it is well-known that $\rho(\mathcal{O}_K) = \frac{D(\Cl_K)}{2} = \frac{37}{2}$.
\end{ex}

This is a shocking disparity, especially considering $N_K$ is half-factorial if and only if $\mathcal{O}_K$ is half-factorial for a Galois number field $K$ (\cite{JNT}). Heuristically, we would expect the disparity between these two values to increase as the size of the induced subgroup of $\text{Aut}(\Cl_K)$ does. In the case of a quadratic number field with class group $\bbz/2n\bbz$ and $n\geq 2$, we have $D(\mathfrak{B}_\Gamma(\Cl_K)) = 1 + n$ (\cite{BMOS22}), so $\frac{\rho(\mathcal{O_K})}{\rho(N_K)}= \frac{2n}{n+1}$ and the order of the induced subgroup is $2$. For $K = \bbq(\zeta_{37})$, we have $\frac{\rho(\mathcal{O_K})}{\rho(N_K)}= \frac{37}{3}$ and the order of the induced subgroup is $36$. These observations motivate the following conjecture. 

\begin{conj}
    Let $p_i$ be the $i^{th}$ irregular prime and $K = \bbq(\zeta_{p_i})$. 
    \[
    \lim_{i\to \infty} \frac{\rho(\mathcal{O}_K)}{\rho(N_K)} = \infty
    \]
\end{conj}



Connections have recently been made between the arithmetic of this action and coding theory in \cite{borello2025geometry}. We conclude by returning to the quadratic case and drawing an interesting connection between the arithmetic of these normsets and a classic problem from computational complexity theory. In particular, we are interested in the non-trivial norm collisions mentioned earlier in this section. Let us formalize this concept as a decision problem which we will refer to as the equal norm problem.

\begin{equalnorm}
    Let $K$ be a Galois number field and a nonzero nonunit $\alpha \in \mathcal{O}_K$. The equal norm problem is to decide if there exists $\beta \in \mathcal{O}_K$ with $N(\alpha)=N(\beta)$ and $(\sigma(\alpha)) \neq (\beta)$ for any conjugate $\sigma(\alpha)$ of $\alpha$. 
\end{equalnorm}

Note that the condition $(\sigma(\alpha)) \neq (\beta)$ means $\beta$ is not a unit multiple of $\sigma(\alpha)$---precluding trivial solutions. We now recall a classic NP-complete decision problem.

\begin{partition}
    Given a nonempty multiset $S$ of positive integers, the partition problem is to decide whether or not $S$ can be partitioned into two subsets $S_1$ and $S_2$ such that the sums of the numbers in $S_1$ and $S_2$ are equal. 
\end{partition}

Our goal now is to show that any instance of the partition problem can be realized as an instance of the equal norm problem. With this goal in mind, we present the following theorem which follows directly as a porism of Theorem 1 in \cite{ankeny1955divisibility}.

\begin{thm}
    For any positive integer $n$, there exists a square free integer $d$ such that $(3) = \p\q$ in $\mathcal{O}_K$ where $K = \bbq(\sqrt{-d})$, and $[\p]$ has order $n$ in $\Cl_K$.
\end{thm}

For the details of how to find such a $d$, see \cite{ankeny1955divisibility}. This theorem is precisely what we need to make the reduction in question. 

\begin{thm}\label{exist}
    Any instance of the partition problem can be realized as an instance of the equal norm problem for some $\alpha$ in a quadratic number field $K$. What is more, we have a deterministic algorithm for finding such an $\alpha$ and $K$.
\end{thm}

\begin{proof}
     Let $S = \{a_1,a_2,...,a_k\}$ be a nonempty multiset of positive integers and write $n = \sum_{i=1}^k a_i$. By Theorem \ref{exist}, there exists a square free integer $d$ such that $(3) = \p\q$ in $\mathcal{O}_K$ where $K = \bbq(\sqrt{-d})$, and $[\p]$ has order $n$ in $\Cl_K$. Now, consider the 0-sequence $\{[\p^{a_1}],[\p^{a_2}],...,[\p^{a_k}]\}$, and let $\q_i$ be a prime representative of $[\p^{a_i}]$ for all $1\leq i\leq k$. Then, $\{[\q_1],[\q_2],...,[\q_k]\}$ is also a 0-sequence. 

    Now, let $(\alpha) = \q_1\q_2\dotsm\q_k$. All other elements (up to unit multiplication) with norm $N(\alpha)$ can be achieved by discerning which $\q_i$ we can change to $\sigma(\q_i) = \bar{\q}_i$ and achieve a principal ideal, where $\sigma$ is complex conjugation. As $\sigma$ acts on $\Cl_K$ by sending elements to their inverses, finding an element $\beta \in \mathcal{O}_K$ with $N(\alpha) = N(\beta)$ and $(\beta) \neq (\alpha)$ or $(\bar{\alpha})$ corresponds to changing some $0 < t < k$ of the $[\p_i]$ to $[\p_i]^{-1}$ such that, without loss of generality, $\{[\p^{a_1}]^{-1},...,[\p^{a_t}]^{-1},[\p^{a_t}],...,[\p^{a_k}]\}$ is also a 0-sequence. Assume for the purpose of contradiction that $(\alpha) = (\beta) $. Then, by uniqueness of prime ideal factorization, we have some $\sigma(\q_i)=\q_j \implies [\q_i]^{-1}=[\q_j]\implies [\p^{a_i}]^{-1} = [p^{a_j}]\implies [\p^{a_i +a_j}] = \text{Prin}(\mathcal{O}_K) = [\p^n]$. Thus, we must have $a_i + a_j = n$, but this contradicts our assumption that $k \geq 3$. Thus, the decision for the equal norm problem for $\alpha \in \mathcal{O}_K$ is yes if and only if (without loss of generality) we can add some nonzero number of weights weights such that $\{[\p^{a_1}]^{-1},...,[\p^{a_t}]^{-1},[\p^{a_t}],...,[\p^{a_k}]\}$ is a 0-sequence. This will hold if and only if $-a_1 -\dotsm - a_t + a_{t+1} + \dotsm a_k \equiv 0$ mod $n$ $\iff -a_1 -\dotsm - a_t + a_{t+1} + \dotsm +a_k = 0 \iff a_{t+1} + \dotsm +a_k = a_1 +\dotsm + a_t$ which we observe holds (without loss of generality) if and only if the decision for the partition problem for $S$ is yes. Finally, the case when $k\leq 1$ is degenerate, so we only need concern ourselves with $k = 2$. As our only options are yes if $S = \{a,a\}$ and no if $S=\{a,b\}$ where $a=b$, we can pick an arbitrary affirmative and negative instance of the equal norm problem to assign in either case. This completes the proof.
    
\end{proof}

It is important to note that this reduction likely cannot be made in polynomial time because finding prime ideals representatives of the $[\p^{a_i}]$ classes is very likely a hard problem without full knowledge of the class group. Regardless, this theorem creates an interesting bridge between computational complexity and the arithmetic of Galois actions. The computations for the following example were conducted in SageMath.  

\begin{ex}
    Consider the multiset $S = \{1,1,2,3,5\}$ with $1 + 1+2+3+5 = 12$. By \cite{ankeny1955divisibility}, we want to find a square-free integer $d$ of the form $d = 3^{12} -x^2$ for some integer $x$ with $2|x$ and $0 < x<(2\cdot 3^{11})^\frac{1}{2}$, and we observe that $d:= 3^{12}-2^2= 531437 = 17\cdot 43 \cdot 747$ suffices. Now, let $K = \bbq(\sqrt{-d})$ so that $\mathcal{O}_K = \bbz[\sqrt{-d}]:=\bbz[\omega]$.

    Now, $(3) = \p_1\p_2 = (3,1+\omega)(3,2+\omega)$. We will take $\p_1=(3,1+\omega)$, and we would like to find prime representatives of the classes $\{[\p_1^1],[\p_1^1],[\p_1^2],[\p_1^3],[\p_1^5]\}$. We choose $\q_1 = \p_1$, $\q_2 = (59077,59061 + \omega)$, $\q_3 = (21379,21165+\omega)$, and $\q_5 = (3167,488+\omega)$ so that $\{[\p_1^1],[\p_1^1],[\p_1^2],[\p_1^3],[\p_1^5]\} = \{[\q_1],[\q_1],[\q_2],[\q_3],[\q_5]\}$ with each $\q_i$ prime. We now compute $\q_1\q_1\q_2\q_3\q_5 = (5766821-2272\omega) := (\alpha)$. 
    
    Now, the equal norm problem for $\alpha$ will be equivalent to the partition problem for $S$. Note that solving the equal norm problem for $\alpha$ in this context is equivalent to finding a solution to the Diophantine equation $a^2 + db^2 = N(\alpha)$ distinct from $(a,b) = (\pm 5766821,\pm2272)$ which correspond to unit multiples of $\alpha$ and $\bar{\alpha}$. We expect this problem to be far more tractable in general than the partition problem. Observe that $(4690134, -51335)$ is one such solution and corresponds to the element $\beta := 4690134 - 51335\omega$. Factoring, we find $(\beta) = \bar{\q}_1\q_1\q_2\q_3\bar{\q}_5$ which delineates a solution to the corresponding partition problem when we observe which primes were conjugated. Taking $S_1 = \{1,5\}$ and $S_2 = \{1,2,3\}$, we see that this is in fact a partition of $S$ into to sets with equal sum. For $n = 12$, there are certainly many smaller quadratic class groups in which we could embed this problem. In the case that the class group into which we are embedding is completely known, this may represent a feasible approach to the partition problem for larger $n$. Furthermore, efficient quantum algorithms exist for the computation of the class group and other computations relevant to this reduction---see \cite{biasse2016efficient} for details.

\end{ex}

\bibliography{References}{}
\bibliographystyle{plain}

\end{document}